%Wasserstein Metric
%Jimmie Lawson
%12/2015

\documentclass[12pt]{amsart}
%l\usepackage[active]{srcltx} %SRC Specials: DVI [Inverse] Search
\usepackage{amsfonts,amssymb}
\usepackage[all]{xypic}
\usepackage{amsmath}
\usepackage{amsthm,amscd,latexsym}
\usepackage{color}

\newtheorem{theorem}{Theorem}[section]
\newtheorem{corollary}[theorem]{Corollary}
\newtheorem{Definition}[theorem]{Definition}
\newtheorem{lemma}[theorem]{Lemma}
\newtheorem{proposition}[theorem]{Proposition}
\newtheorem{Example}[theorem]{Example}
\newtheorem{Remark}[theorem]{Remark}

\newenvironment{remark}{\begin{Remark}\begin{em}}{\end{em}\end{Remark}}
\newenvironment{example}{\begin{Example}\begin{em}}{\end{em}\end{Example}}
\newenvironment{definition}{\begin{Definition}\begin{em}}{\end{em}\end{Definition}}

\setlength{\textwidth}{6in}

\setlength{\oddsidemargin}{0pt} \setlength{\evensidemargin}{0pt}
\setlength{\textwidth}{6in}

\newcommand{\Bx}{{\mathbf x}}
\newcommand{\By}{{\mathbf y}}

\newcommand{\ve}{\varepsilon}

\DeclareMathOperator{\ua}{\uparrow\!}
\DeclareMathOperator{\da}{\downarrow\!}
\DeclareMathOperator{\Pro}{\mathcal{P}}

\begin{document}
\title[Ordered Probability Spaces]{Ordered Probability Spaces}
%; categorial constructions, projective and inductive systems}
\author{Jimmie Lawson}

\address{Department of Mathematics, Louisiana State University,
Baton Rouge, LA70803, USA}\email{lawson@math.lsu.edu}

\thanks{\emph{ AMS Classifications} (2010):  Primary 60B05, 46Bl40, 54F05, 28C20; Secondary 47B65 }
 \keywords{Borel probability measure, metric space, Wasserstein metric, barycentric map, partially ordered space}

%\date{\today}
\maketitle
\begin{abstract}
Let $C$ be an open cone in a Banach space equipped with the Thompson metric with  closure a normal cone. 
The main result gives sufficient conditions for Borel probability measures $\mu,\nu$ on $C$ with finite first moment 
for which  $\mu\leq \nu$ in  the stochastic order induced by the cone to be order approximated by sequences $\{\mu_n\},\{\nu_n\}$ of uniform 
finitely supported measures in the sense that $\mu_n\leq \nu_n$ for each $n$ and $\mu_n\to \mu$, $\nu_n\to \nu$
in the Wasserstein metric.  This result is the crucial tool in developing a pathway for extending various inequalities on 
operator and matrix means, which include the harmonic, geometric, and arithmetic operator means on the cone
of positive elements of a $C^*$-algebra, to the space $\mathcal{P}^1(C)$ of Borel measures of finite first moment on $C$. As an
illustrative particular application, we obtain  the monotonicity of the Karcher geometric mean on $\mathcal{P}^1(\mathbb{A}^+)$ for the positive cone 
$\mathbb{A}^+$ of a $C^*$-algebra $\mathbb{A}$.

 \end{abstract}

\section{Introduction}  
The set $\mathcal{P}^1(M)$  of Borel probability measures with finite first moment on a metric space $M$ admits a standard metric called the Wasserstein metric.  
In \cite{St} K.-T. Sturm  considered contractive barycentric maps $\beta: \mathcal{P}^1(M)\to M$, maps that were metrically contractive and carried a point 
measure to the corresponding point.  If $M$ is complete, then a contractive barycentric map on the set $\mathcal{P}_0(M)$ of uniform, finitely
supported probability measures extends uniquely to a barycentric map on $\mathcal{P}^1(M)$, since $\mathcal{P}_0(M)$ is dense in $\mathcal{P}^1(M)$
in the topology of the Wasserstein metric.  In \cite{LL16} Y.\ Lim and the author have modified this result by giving appropriate conditions for uniformity 
 of an indexed family of symmetric means $\{G_n\}_{n\geq 1}$ to induce to a unique contractive barycentric map on $\mathcal{P}^1(M)$.

In this paper we revisit these results in the setting of metric spaces equipped with a closed partial order.  The partial order induces in a natural way
a partial order on $\mathcal{P}^1(M)$, the stochastic order. In our main result we give sufficient conditions to show if $\mu\leq \nu$ in $\mathcal{P}^1(M)$, then there exist
sequences $\{\mu_n\},\{\nu_n\}\subseteq \mathcal{P}_0(M)$ such that $\mu_n\to\mu$, $\nu_n\to \nu$, and $\mu_n\leq\mu_n$ for each $n$. 
(This result may be viewed as strengthening the result that $\mathcal{P}_0(M)$ is dense in $\mathcal{P}^1(M)$.)  

Our main motivation for deriving these results is the setting of the open cone of positive matrices with the Loewner order, or more generally the
cone of positive elements in a $C^*$-algebra.  We show in this case that the Thompson metric satisfies the hypotheses of our main theorem.
This allows us to conclude that if a contractive barycentric map $\beta:\mathcal{P}^1(M)\to M$ is order preserving when restricted to
$\mathcal{P}_0(M)$, then is order preserving on $\mathcal{P}^1(M)$.  It is frequently the case that order preservation is considerably easier 
to derive for $\mathcal{P}_0(M)$, so the paper provides an important and helpful pathway for showing order preservation of barycentric maps.

In related work  S. Kim and H. Lee \cite{KL} extended via the Bochner integral the theory of the widely studied 
least squares aka Cartan aka Karcher mean on the open cone $\mathbb{P}_n$ of positive definite $n\times n$-matrices to a barycentric map on the compactly supported Borel measures, with an extension to the square-integrable functions.   Using this approach they were able to extend basic properties of the Karcher mean
to this more general setting.  A more ambitious program was undertaken by M. P\'alfia \cite{Pa}, who generalized the techniques
of \cite{LL14} to define and approximate a barycentric map given by the solution of an integral  generalized Karcher equation for a given Borel
probability measure with bounded support on the cone $\mathbb{P}$ of positive operators on a Hilbert space.

\section{Preliminaries}
For a metric space $X$, let $\mathcal{B}(X)$ be the algebra of Borel sets, the smallest $\sigma$-algebra
containing the open sets.  A \emph{Borel measure} $\mu$ is a countably additive (positive) measure defined on $\mathcal{B}(X)$.
The support of $\mu$ consists of all points $x$ for which $\\mu(U)>0$ for each open set $U$ containing $x$.  The support of 
$\mu$ is always a closed set.  The \emph{finitely supported measures} are those of the form $\sum_{i=1}^n r_i\delta_{x_i}$, where
for each $i$, $r_i\geq 0$, $\sum_{I=1}^n r_i=1$, and $\delta_{x_i}$ is the point measure of mass $1$ at the point $x_i$.
As far as the author has determined, the earliest reference for the following result is \cite{MS}.
\begin{lemma}\label{L:sep}
Let $X$ be a metric space and $\mu$ a finite Borel measure, i.e., one for which  $\mu(X)<\infty$.  Then supp$(\mu)$,  the support of $\mu$, is separable.
\end{lemma}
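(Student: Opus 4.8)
The plan is to produce, for each $n\in\mathbb{N}$, a countable subset $S_n$ of $\mathrm{supp}(\mu)$ that is $(1/n)$-dense in $\mathrm{supp}(\mu)$, and then take $D=\bigcup_{n}S_n$ as a countable dense subset. First, fixing $n$, I would invoke Zorn's lemma to choose a subset $S_n\subseteq\mathrm{supp}(\mu)$ maximal with respect to the property that $d(x,y)\geq 1/n$ for all distinct $x,y\in S_n$. The key point is that $S_n$ must be countable: the open balls $\{B(x,1/(2n)):x\in S_n\}$ are pairwise disjoint (by the triangle inequality and the separation of the centers), and each has $\mu$-measure strictly positive because its center lies in $\mathrm{supp}(\mu)$. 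Since $\mu(X)<\infty$, for each $k\in\mathbb{N}$ at most finitely many of these balls can have measure exceeding $1/k$ (if $j$ of them did, their disjoint union would have measure $>j/k$, forcing $j<k\,\mu(X)$); hence $S_n=\bigcup_{k\geq 1}\{x\in S_n:\mu(B(x,1/(2n)))>1/k\}$ is a countable union of finite sets.

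Next I would use maximality: every $z\in\mathrm{supp}(\mu)$ must satisfy $d(z,x)<1/n$ for some $x\in S_n$, for otherwise $S_n\cup\{z\}$ would be a strictly larger $(1/n)$-separated set, contradicting maximality. Thus $S_n$ is $(1/n)$-dense in $\mathrm{supp}(\mu)$. Finally, setting $D=\bigcup_{n\in\mathbb{N}}S_n$, the set $D$ is a countable union of countable sets and hence countable; and it is dense in $\mathrm{supp}(\mu)$, since given $z\in\mathrm{supp}(\mu)$ and $\varepsilon>0$ one picks $n$ with $1/n<\varepsilon$ and then $x\in S_n$ with $d(z,x)<1/n<\varepsilon$. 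Therefore $\mathrm{supp}(\mu)$ is separable.

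The only genuinely non-routine ingredient is the observation that a pairwise-disjoint family of balls, each of positive $\mu$-measure, is necessarily countable when $\mu$ is finite; the rest is standard point-set topology. I do not expect any serious obstacle here. One could avoid Zorn's lemma by constructing $S_n$ greedily through a transfinite recursion and noting that the measure bound forces termination after countably many steps, but appealing to maximality directly is the cleaner route.
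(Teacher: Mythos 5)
Your proof is correct and follows essentially the same route as the paper: a maximal $(1/n)$-separated subset of the support obtained via Zorn's lemma, countability of that set via the pairwise-disjoint balls of radius $1/(2n)$ each of positive measure together with the finiteness of $\mu$, and then the union over $n$ as the countable dense set. The only cosmetic difference is that the paper first disposes of the case where the support has fewer than two points so that its scattered families are visibly nonempty, whereas you implicitly allow the empty or singleton separated sets, which is equally valid.
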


\begin{proof} The case that supp$(\mu)$  has cardinality less than two is trivial, so we assume there exist $a,b\in$ supp$(\mu)$ such that $a\ne b$.
Let $d(a,b)\geq 1/N$, $N$ a positive integer.  Let us call a nonempty subset $A$ of  $X$  $\ve-$\emph{scattered} if $d(x,y)\geq \ve$ whenever
$x,y\in A$ and $x\ne y$.   For $m\geq N$, let $\mathcal{A}_m=\{A\subseteq\,\textrm{supp}(\mu): A \mbox{ is } (1/m)-\textrm{scattered}\}$.
Note that $\{a,b\}$ is $1/m$-scattered and the union of any collection of members of $\mathcal{A}_m$ totally ordered by inclusion is again
a member of $\mathcal{A}_m$.  Hence by Zorn's Lemma, there exists a maximal member $A_m$ of $\mathcal{A}_m$.  By the maximality of
$A_m$, if $x\in\,$supp$(\mu)$, then $d(x,a)<1/m$ for some $a\in A_m$ (otherwise we can make $A_m$ larger by adding $x$).

We claim that $A_m$ is countable.  First note that since $A_m$ is $1/m$-scattered, the collection $\{B_{1/2m}(a): a\in A_m\}$
of open balls of radius $1/2m$ is pairwise disjoint. For each $k\in \mathbb{N}$, define $A_{m,k}=\{a\in A_m :
\mu(B_{1/2m}(a))\geq 1/k\}$. Note that for $n$ members of $A_{m,k}$, the union $\bigcup_{i=1}^n B_{1/2m}(a_i)$ has
measure at least $n/k\leq \mu(X)$. Thus $n$ is bounded above by $k\mu(X)$,  from which it follows that $\mathcal{A}_{m,k}$ is finite.
Note for every $a\in A_m$, $\mu(B_{1/2m}(a))>0$ since $a\in\,$supp$(\mu)$.  It follows that $A_m=\bigcup_kA_{m,k}$, a countable union
of finite sets, and hence $A_m$ is countable.  Thus $D=\bigcup_m A_m$ is a countable union of countable sets, hence countable.
The fact that for any $x\in\,$supp$(\mu)$ and any $m$, $d(x,A_m)<1/m$ implies that $D$ is dense.
 \end{proof}
 
 \begin{remark}\label{R:suppeqone}
 Let $\mu$ be a finite Borel measure on $X$, a metric space. \\
(i) For $X$ separable, the complement of the support is a countable union of open sets of measure $0$, hence has measure $0$,
and thus $\mu(\mathrm{supp}\,\mu)=1$.\\
(ii)  Similarly for $\tau$-additive measures one may realize the complement of the support as a directed union of open sets of measure $0$ 
 hence the complement has measure $0$, and the support has measure $1$. (Recall that a Borel measure $\mu$ is $\tau$-\emph{additive}
 if $\mu(\bigcup_\alpha U_\alpha)=\sup_\alpha \mu(U_\alpha)$ for all directed families $\{U_\alpha: \alpha\in D\}$
 of open sets.)
 \end{remark}

 In what follows we wish to avoid dealing with the problematic and pathological case of probability 
 measures that are not fully supported in the sense that the measure of their support is less than one.
We can give a convenient characterization of the measures we will consider.  First we recall the \emph{Prohorov metric} $\pi(\mu,\nu)$
defined for two Borel probability measures $\mu,\nu$ on $X$ as the infimum of all $\ve>0$ such that for all closed sets $A$,
$$\mu(A)\leq \nu(A^\ve)+\ve,~~~\nu(A)\leq \mu(A^\ve)+\ve,$$
where $A^\ve=\{x\in X:d(x,y)<\ve \mbox{ for some }y\in A\}$.  

\begin{proposition}\label{P:support1}
 For  $\mu$ a Borel probability on a metric space $(X,d)$, the following are equivalent.\\
(1) There exists a sequence $\{\mu_n\}$ of finitely supported probability measures $($with rational coefficients$)$ that converges to $\mu$ with respect to
the Prohorov metric.\\
(2) The support of $\mu$ has measure $1$.\\
(3) The measure $\mu$ is $\tau$-additive.
\end{proposition}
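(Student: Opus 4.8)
The plan is to prove the equivalences via the implications $(3)\Rightarrow(2)$, $(2)\Rightarrow(1)$, $(1)\Rightarrow(2)$, and $(2)\Rightarrow(3)$; throughout put $S=\mathrm{supp}\,\mu$, which is separable by Lemma~\ref{L:sep}. The implication $(3)\Rightarrow(2)$ is exactly Remark~\ref{R:suppeqone}(ii). For $(2)\Rightarrow(3)$, assume $\mu(S)=1$ and let $\{U_\alpha:\alpha\in D\}$ be a directed family of open sets with union $U$. Since $\mu(X\setminus S)=0$ it suffices to handle $\mu(U\cap S)$, and $\{U_\alpha\cap S\}$ is a directed open cover of $U\cap S$ inside the separable --- hence hereditarily Lindel\"of --- space $S$. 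I would extract a countable subcover and, using directedness of the index set, replace it by an increasing sequence $U_{\alpha_1}\subseteq U_{\alpha_2}\subseteq\cdots$ from the family whose traces on $S$ still cover $U\cap S$; continuity of $\mu$ from below then gives
$$\mu(U)=\mu(U\cap S)=\lim_k\mu(U_{\alpha_k}\cap S)=\lim_k\mu(U_{\alpha_k})\le\sup_\alpha\mu(U_\alpha)\le\mu(U),$$
which is $\tau$-additivity.

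For $(1)\Rightarrow(2)$, which I expect to be the crux, suppose $\mu_n\to\mu$ in the Prohorov metric with each $\mu_n$ supported on a finite set $F_n$, and set $D=\bigcup_n F_n$; then $D$ is countable and $\overline D$ is a separable closed subset of $X$. Fixing $\ve>0$ and picking $n$ with $\pi(\mu_n,\mu)<\ve$, the defining inequality for $\pi$ applied to the closed set $F_n$ gives $1=\mu_n(F_n)\le\mu(F_n^\ve)+\ve$, and since $F_n^\ve\subseteq(\overline D)^\ve$ we obtain $\mu\big((\overline D)^\ve\big)\ge 1-\ve$. As $\ve>0$ is arbitrary and the sets $(\overline D)^{1/k}$ decrease to $\overline D$, continuity of $\mu$ from above forces $\mu(\overline D)=1$. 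Thus $\mu$ restricts to a Borel probability measure $\nu$ on the separable space $\overline D$, Remark~\ref{R:suppeqone}(i) applies to $\nu$ to give $\nu(\mathrm{supp}\,\nu)=1$, and since every $X$-open set that meets $\overline D$ meets it in a nonempty $\overline D$-open set one checks $\mathrm{supp}\,\nu\subseteq S$; hence $\mu(S)\ge\mu(\mathrm{supp}\,\nu)=\nu(\mathrm{supp}\,\nu)=1$.

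For $(2)\Rightarrow(1)$, assuming $\mu(S)=1$ I would view $\mu$ as a Borel probability measure on the separable metric space $S$ and invoke the classical fact that on a separable metric space the finitely supported probability measures with rational weights are Prohorov-dense: partition $S$ into countably many Borel sets of diameter $<\ve$ using a countable dense set, keep finitely many of them carrying all but $\ve$ of the mass, collapse each onto a chosen representative point, push the leftover mass onto one such point, and round the weights to rationals summing to $1$. Finally, for measures carried by $S$ the Prohorov distance computed in $X$ is at most the one computed in $S$ --- because the trace on $S$ of a closed subset of $X$ is closed in $S$, with its $\ve$-neighborhood relative to $S$ contained in its $\ve$-neighborhood in $X$, while $\mu$ and the $\mu_n$ ignore $X\setminus S$ --- so the approximating sequence converges to $\mu$ in $\pi$ as measures on $X$.

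The principal obstacle is the opening estimate in $(1)\Rightarrow(2)$: the assertion there is really that Prohorov-approximability by finitely supported measures cannot leak mass into a non-separable part of $X$, and the bound $\mu((\overline D)^\ve)\ge 1-\ve$ followed by letting $\ve\downarrow 0$ is precisely what rules this out; once $\mu$ is known to be carried by the separable set $\overline D$, Lemma~\ref{L:sep} and Remark~\ref{R:suppeqone} dispatch the rest. A smaller point needing care is the reduction of the directed family in $(2)\Rightarrow(3)$ to a countable increasing chain, which rests on the hereditary Lindel\"of property of separable metric spaces.
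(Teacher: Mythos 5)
Your proposal is correct, and three of the four implications ($3\Rightarrow 2$, $2\Rightarrow 3$, $2\Rightarrow 1$) follow essentially the same path as the paper: reduce to the separable support via Lemma~\ref{L:sep}, quote Remark~\ref{R:suppeqone}, use the hereditary Lindel\"of property for $\tau$-additivity, and invoke (or, in your case, sketch) the standard density of rational finitely supported measures in $\mathcal{P}(S)$ for a separable $S$, together with the observation that Prohorov approximation inside $S$ transfers to $X$. Where you genuinely diverge is $(1\Rightarrow 2)$: the paper passes from Prohorov convergence to weak convergence and applies the Portmanteau theorem to the closed separable set $A=\overline{\bigcup_n \mathrm{supp}\,\mu_n}$ to get $\mu(A)=1$ in one stroke, whereas you work directly from the defining inequality of the Prohorov metric, obtaining $1=\mu_n(F_n)\le\mu\bigl((\overline D)^{\ve}\bigr)+\ve$ and then letting $\ve\downarrow 0$ via continuity from above along the decreasing sets $(\overline D)^{1/k}\downarrow\overline D$. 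Your route is more elementary and self-contained (no appeal to weak convergence or Portmanteau), at the cost of having to check that $\bigcap_k(\overline D)^{1/k}=\overline D$ and that the support of the restricted measure sits inside $\mathrm{supp}\,\mu$; the paper's route is shorter but imports more machinery from Billingsley. Both are sound, and the remaining steps (restricting to the separable carrier and applying Remark~\ref{R:suppeqone}(i)) coincide.
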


\begin{proof}  ($1\Rightarrow 2$)  By (iii) of the section on the Prohorov metric of \cite{Bi}, $\{\mu_n\}$ converges weakly to $\mu$.
 Let $A$ be the closure of $\bigcup_{n=1}^\infty \mathrm{supp},\mu_n$.  Then $A$ is a separable subspace of  $X$.  We note
 that $\mu_n(A)=1$ for each $n$.   Then $\mu(A)=1$ by the Portmanteau Theorem \cite[Theorem 2.1(iii)]{Bi}. It follows that the
 complement of $A$ has $\mu$-measure $0$ (hence misses the support) and that the
 restriction of $\mu$ to $A$ is a Borel probability measure on the separable space $A$. Hence by Remark \ref{R:suppeqone}(i) 
 the support of $\mu$, which will be the same for $\mu$ and $\mu$ restricted to $A$, has measure $1$.  
 
($2\Rightarrow 1$)  The measure $\mu$ restricted to $A=$supp$(\mu)$  is a Borel probability measure on $A$, which is separable
by Lemma \ref{L:sep}.   It is standard fact that the space of Borel probabilty measures on a separable metric space 
endowed with the Prohorov metric is again
separable, and the proof of this fact typically involves constructing a sequence of finitely supported measures $\{\mu_n\}$ 
with rational coefficients that are dense (see item (vi) leading up to Theorem 6.8 of \cite{Bi}).  Applying this fact to $A$ yields the desired sequence.

($3\Leftrightarrow 2$) One direction follows from Remark \ref{R:suppeqone}.  Assume (2) and note for any Borel set $B$, 
$$\mu(B)=\mu(B\cap\textrm{supp}(\mu))+\mu(B\setminus \textrm{supp}(\mu))=\mu(B\cap\textrm{supp}(\mu))+0=\mu(B\cap\textrm{supp}(\mu)).$$
Let $\{U_\alpha: \alpha\in D\}$ be a directed
family of open sets with union $W$. Then in the subspace supp$(\mu)$, $\{U\cap\textrm{supp}(\mu): U\in D\}$ is a directed family of open sets
with union $W\cap U$.  It is standard that every Borel probability measure in a separable metric space is $\tau$-additive (one can pick a countable
subset of $D$ with the same union), so 
$$\mu(W)=\mu(W\cap \textrm{supp}(\mu))=\sup\{\mu(U\cap\textrm{supp}(\mu):U\in D\}=\sup\{\mu(U): U\in D\}.$$
\end{proof}

Let ${\mathcal P}(X)$ be the set of all fully supported Borel probability measures on $(X, {\mathcal B}(X))$ (in the sense that
$\mu($supp$(\mu))$=1)
and  ${\mathcal P}_{0}(X)$ the set of all $\mu\in {\mathcal P}(X)$ of the form
$\mu=\frac{1}{n}\sum_{j=1}^{n}\delta_{x_{j}}$ with  $n\in {\mathbb N},$ where $\delta_x$ is the point measure of mass $1$ at $x$.
Members of $\mathcal{P}_0(X)$ are also referred to as uniform probability measures with finite support.
For $p\in[1,\infty)$ let ${\mathcal P}^{p}(X)\subseteq{\mathcal P}(X)$ be the set of probability measures with \emph{finite $p$-moment}: for some (and hence all) $y\in X,$
$$\int_{X}d^p(x,y)d\mu(x)<\infty.$$
For $p=\infty$, $\mathcal{P}^\infty(X)$ denotes the set of probability measures with bounded support.  We sometimes
refer to members of $\mathcal{P}^1(X)$ as \emph{integrable} probability measures.

For metric spaces $X$ and $Y$, a map $f:X\to Y$ is \emph{measurable}  if $f^{-1}(A)\in\mathcal{B}(X)$  whenever $A\in\mathcal{B}(Y)$.
For $f$ to be measurable, it suffices that $f^{-1}(U)\in\mathcal{B}(X)$ for each open subset $U$ of $Y$, and hence continuous functions
are measurable.  A measurable map $f:X\to Y$  induces a \emph{push-forward} map $f_*:\Pro(X)\to\Pro(Y)$ defined by
$f_*(\mu)(B)=\mu(f^{-1}(B))$ for $\mu\in\Pro(X)$ and $B\in\mathcal{B}(Y)$.  Note that $\mathrm{supp}(f_*(\mu))=f(\mathrm{supp}(\mu))^-$,
the closure of the image of the support of $\mu$.

We say that $\omega\in\Pro(X\times X)$ is a \emph{coupling} for $\mu,\nu\in\Pro(X)$ and that $\mu,\nu$ are \emph{marginals} for
$\omega$ if for all $B\in\mathcal{B}(X)$
\begin{equation*}
\omega(B\times X)=\mu(B) \qquad \mathrm{and} \qquad \omega(X\times B)=\nu(B).
\end{equation*}
Equivalently $\mu$ and $\nu$ are the push-forwards of $\omega$ under the projection maps $\pi_1$ and $\pi_2$ resp.  We note that
one such coupling is the product measure $\mu\times \nu$, and that for any coupling $\omega$ it must be the case that
$\mathrm{supp}(\omega)\subseteq \mathrm{supp}(\mu)\times\mathrm{supp}(\nu)$.  We denote the set of all couplings for $\mu,\nu \in \Pro(X)$ by
$\Pi(\mu,\nu)$.

For $1\leq p<\infty$, the  $p$-Wasserstein distance  $d_p^W$ (alternatively Kantorovich-Rubinstein distance)
on ${\mathcal P}^{p}(X)$ is defined by
$$d_p^W(\mu_{1},\mu_{2}):=\bigg(\inf_{\pi\in\Pi(\mu_1,\mu_2)} \int_{X\times X} d^p(x,y) d\pi(x,y)\bigg)^{1/p}.$$
 It is known that $d_p^W$  is a complete metric on ${\mathcal P}^{p}(X)$ whenever $X$ is a complete metric space
and ${\mathcal P}_{0}(X)$ is $d_p^W$-dense in ${\mathcal P}^{p}(X)$ \cite{BO,St}.  Furthermore, it follows from the H\"older
inequality that $d_p^W\leq d_{p'}^W$ whenever $p\leq p'$.  We will be working almost exclusively with $d_1^W$, which we
henceforth write more simply as $d^W$.

The space $\Pro(X)$ is convex in the sense that $(1-t)\mu+t\nu\in\Pro(X)$ whenever $\mu,\nu\in\Pro(X)$.  It is easy to see that
$\Pro^1(X)$ is a convex subset of $\Pro(X)$.
\begin{lemma}\label{L:convex}
For $\mu_1,\mu_2,\nu\in\Pro^1(X)$, $d^W((1-t)\mu_1+t\mu_2, \nu)\leq (1-t)d^W(\mu_1,\nu)+td^W(\mu_2,\nu)$.
\end{lemma}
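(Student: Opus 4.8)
The plan is to exploit the fact that couplings can be combined linearly. Fix optimal (or near-optimal) couplings $\pi_1 \in \Pi(\mu_1,\nu)$ and $\pi_2 \in \Pi(\mu_2,\nu)$ realizing $d^W(\mu_1,\nu)$ and $d^W(\mu_2,\nu)$ respectively; since we are working with $p=1$ and the infimum defining $d^W$ need not be attained in general, I would instead fix $\varepsilon>0$ and pick $\pi_i$ with $\int_{X\times X} d\, d\pi_i \le d^W(\mu_i,\nu)+\varepsilon$, then let $\varepsilon\to 0$ at the end. The key observation is that the convex combination $\pi := (1-t)\pi_1 + t\pi_2$ is a Borel probability measure on $X\times X$, and it is a coupling of $(1-t)\mu_1 + t\mu_2$ and $\nu$: indeed for any $B\in\mathcal B(X)$,
\[
\pi(B\times X) = (1-t)\pi_1(B\times X) + t\pi_2(B\times X) = (1-t)\mu_1(B) + t\mu_2(B),
\]
and similarly $\pi(X\times B) = (1-t)\nu(B) + t\nu(B) = \nu(B)$. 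One should also check $\pi \in \Pi$ is admissible in the sense that $(1-t)\mu_1+t\mu_2$ and $\nu$ lie in $\Pro^1(X)$, which follows from the remark preceding the lemma that $\Pro^1(X)$ is convex, and from the finiteness of $\int d\,d\pi_i$.

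Next I would estimate $d^W((1-t)\mu_1+t\mu_2,\nu)$ by plugging this particular coupling $\pi$ into the infimum defining the Wasserstein distance:
\[
d^W\big((1-t)\mu_1+t\mu_2,\nu\big) \le \int_{X\times X} d(x,y)\, d\pi(x,y) = (1-t)\int_{X\times X} d\, d\pi_1 + t\int_{X\times X} d\, d\pi_2,
\]
using linearity of the integral with respect to the measure. The right-hand side is at most $(1-t)\big(d^W(\mu_1,\nu)+\varepsilon\big) + t\big(d^W(\mu_2,\nu)+\varepsilon\big) = (1-t)d^W(\mu_1,\nu) + t\, d^W(\mu_2,\nu) + \varepsilon$. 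Letting $\varepsilon\to 0$ gives the desired inequality.

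The only genuinely non-routine point is verifying that $\pi$ is a bona fide coupling — i.e., that the marginal computation above is valid — which is immediate from countable additivity and the definition of marginals, so there is essentially no obstacle; the proof is a short direct computation. If one prefers to avoid the $\varepsilon$-approximation, one can instead note that for $p=1$ on a complete separable (or in our setting, separable-support) space the infimum is attained, but since the lemma is stated for general $X$ the cleaner route is the $\varepsilon$-argument above. I would also remark that the same computation shows joint convexity, $d^W((1-t)\mu_1+t\mu_2,(1-t)\nu_1+t\nu_2) \le (1-t)d^W(\mu_1,\nu_1)+t\,d^W(\mu_2,\nu_2)$, by taking $\pi = (1-t)\pi_1+t\pi_2$ with $\pi_i$ a near-optimal coupling of $\mu_i,\nu_i$; the one-variable statement is the special case $\nu_1=\nu_2=\nu$.
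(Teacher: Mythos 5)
Your proposal is correct and follows essentially the same route as the paper: both form the convex combination $(1-t)\pi_1+t\pi_2$ of couplings, observe it is a coupling of $(1-t)\mu_1+t\mu_2$ with $\nu$, and bound the Wasserstein distance by the resulting integral. The only cosmetic difference is that you use $\varepsilon$-near-optimal couplings and let $\varepsilon\to 0$, whereas the paper works with arbitrary couplings and takes the infimum at the end; these are the same argument.
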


\begin{proof} If $\omega_1\in\Pi(\mu_1,\nu)$ and $\omega_2\in\Pi(\mu_2,\nu)$, then it is straightforward to
see that $(1-t)\omega_1+t\omega_2\in\Pi((1-t)\mu_1+t\mu_2,\nu)$. Thus
\begin{eqnarray*}
d^W((1-t)\mu_1+t\mu_2,\nu)&\leq& \int_{X\times X} d(x,y)\,d((1-t)\omega_1+t\omega_2)\\
&=&(1-t)\int_{X\times X} d(x,y) d\omega_1+t\int_{X\times X}d(x,y) d\omega_2.
\end{eqnarray*}
Taking infs over $\omega_1\in\Pi(\mu_1,\nu)$ and $\omega_2\in\Pi(\mu_2,\nu)$ yields the
desired inequality.
\end{proof}

\section{Approximating Probability Measures with Bounded Ones}
In this section we present for an integrable  probability measure $\mu$ in a metric space $M$ a scheme for approximating $\mu$ 
arbitrarily close in the Wasserstein metric by probability measures with bounded support.  
 %In what follows we allow that possibility that the integrand may be infinite for real nonnegative integrands.
\begin{lemma}\label{L:upbound}
Let $M$ be a metric space and let $f:M\to M$ be a Borel measurable map.  If $q\in \mathcal{P}^1(M)$
and $\int_M d(x,f(x)) dq<\infty$, then $f_*(q)\in \Pro^1(M)$ and
$d^W(q,f_*(q))\leq \int_M d(x,f(x)) dq$.
\end{lemma}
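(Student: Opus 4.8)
\medskip

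The plan is to exhibit an explicit coupling of $q$ and $f_*(q)$, namely the push-forward of $q$ under the map $x\mapsto (x,f(x))$, and then estimate the Wasserstein integral against this coupling. First I would check that $f_*(q)$ lies in $\Pro^1(M)$: fixing a basepoint $y\in M$, the triangle inequality gives $d(f(x),y)\le d(x,f(x))+d(x,y)$, and since $q\in\Pro^1(M)$ we have $\int_M d(x,y)\,dq<\infty$, while $\int_M d(x,f(x))\,dq<\infty$ by hypothesis; hence $\int_M d(f(x),y)\,dq<\infty$. To convert this into the statement that $f_*(q)$ has finite first moment, I would use the change-of-variables formula for push-forward measures: $\int_M d(z,y)\,d(f_*(q))(z)=\int_M d(f(x),y)\,dq(x)<\infty$, so $f_*(q)\in\Pro^1(M)$.

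\medskip

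Next, define $g:M\to M\times M$ by $g(x)=(x,f(x))$. Since $f$ is Borel measurable and the identity is continuous, $g$ is Borel measurable, so $\omega:=g_*(q)\in\Pro(M\times M)$ is well defined. I would verify that $\omega\in\Pi(q,f_*(q))$: for $B\in\mathcal{B}(M)$, $\omega(B\times M)=q(g^{-1}(B\times M))=q(B)$ since $g^{-1}(B\times M)=\{x:x\in B\}=B$, and $\omega(M\times B)=q(g^{-1}(M\times B))=q(f^{-1}(B))=(f_*(q))(B)$. Thus $\omega$ is an admissible coupling.

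\medskip

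Finally I would estimate, again by the change-of-variables formula applied to $g$,
\[
d^W(q,f_*(q))\;\le\;\int_{M\times M} d(x,z)\,d\omega(x,z)\;=\;\int_M d\big(x,f(x)\big)\,dq(x),
\]
which is exactly the claimed bound; finiteness of the right-hand side is the hypothesis, so the bound is meaningful.

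\medskip

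There is no serious obstacle here; the lemma is essentially bookkeeping. The only point requiring a little care is the justification of the change-of-variables identity $\int_{M\times M}\varphi\,d(g_*q)=\int_M \varphi\circ g\,dq$ for the nonnegative Borel function $\varphi(x,z)=d(x,z)$ — this follows from the standard approximation of $\varphi$ by simple functions together with the definition of the push-forward on sets — and the parallel remark that the first-moment condition for $f_*(q)$ is genuinely independent of the choice of basepoint $y$, which is immediate from the triangle inequality.
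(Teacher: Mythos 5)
Your proposal is correct and follows essentially the same route as the paper: the paper also verifies $f_*(q)\in\Pro^1(M)$ via the triangle inequality and change of variables, and then uses the push-forward of $q$ under $x\mapsto(x,f(x))$ as the explicit coupling to bound the Wasserstein distance. No gaps; the only addition in your write-up is the (welcome) remark justifying the change-of-variables identity for the nonnegative Borel integrand.
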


\begin{proof}  We first note by the change of variables formula that
$$\int_M d(x,z) df_*(q)(x)=\int_M d(f(x),z) dq(x)\leq \int_M \left(d(x,f(x))+d(x,z)\right)dq(x)<\infty,$$
so $f_*(q)\in\Pro^1(M)$.

Next consider the measurable map $F:M\to M\times M$ defined by $F(x)=(x,f(x))$, and let
$\mu=F_*(q)\in\mathcal{P}(M\times M)$.
For $\pi_1: M\times M\to M$, projection into the first coordinate, $(\pi_1)_*(\mu)=(\pi_1 F)_*(q)=(1_M)_*(q)=q$.
For the second projection $\pi_2$, $(\pi_2)_*(\mu)=(\pi_2F)_*(q)=f_*(q)$.   So $\mu$ has marginals $q$ and $f_*(q)$.
By definition
$$d^W(q,f_*(q))\leq \int_{M\times M} d(x,y) d\mu=\int_{M\times M} d(x,y)d(F_*(q))=\int_M d(x,f(x)) dq,$$
where the last equality is just the change of variables formula.
\end{proof}

\begin{lemma}\label{L:a1}
Let $M$ be a metric space, $a\in M$, and let $q\in \mathcal{P}^1(M)$.  Let $A_n$ be an increasing sequence of Borel sets such that
$M=\bigcup_n A_n$.  Define $f_n: M\to M$ by $f_n(x)=x$ if $x\in A_n$ and $f_n(x)=a$ otherwise.  Then $\lim_n (f_n)_*(q)= q$ in the Wasserstein
metric.
\end{lemma}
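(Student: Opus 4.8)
The plan is to apply Lemma 3.2 (the preceding \texttt{L:upbound}) to each map $f_n$ and then show the resulting bound tends to $0$. First I would check that $f_n$ is Borel measurable: since $A_n$ is Borel, for any Borel set $B\subseteq M$ the preimage $f_n^{-1}(B)$ is one of $A_n$, $A_n\cup(M\setminus A_n)=M$, $M\setminus A_n$, or $\emptyset$ according to whether $B$ contains $a$ but not all of $f_n(A_n)$, etc.; in every case it lies in $\mathcal{B}(M)$. Next I would verify the integrability hypothesis $\int_M d(x,f_n(x))\,dq<\infty$. By definition $d(x,f_n(x))=0$ for $x\in A_n$ and $d(x,f_n(x))=d(x,a)$ for $x\notin A_n$, so
\[
\int_M d(x,f_n(x))\,dq=\int_{M\setminus A_n} d(x,a)\,dq\le \int_M d(x,a)\,dq<\infty,
\]
the last inequality because $q\in\mathcal{P}^1(M)$. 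Hence Lemma 3.2 applies and gives $(f_n)_*(q)\in\mathcal{P}^1(M)$ together with
\[
d^W\bigl(q,(f_n)_*(q)\bigr)\le \int_{M\setminus A_n} d(x,a)\,dq.
\]

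It then remains to show $\int_{M\setminus A_n} d(x,a)\,dq\to 0$ as $n\to\infty$. This is where I would invoke the Dominated Convergence Theorem (or, equivalently, continuity of measure for the integrals of the fixed integrable function $x\mapsto d(x,a)$). Writing $g(x)=d(x,a)$, which is $q$-integrable, and $g_n=g\cdot \mathbf{1}_{M\setminus A_n}$, we have $|g_n|\le g$ for all $n$, and since $\{A_n\}$ is increasing with $\bigcup_n A_n=M$, the sets $M\setminus A_n$ decrease to $\emptyset$, so $g_n(x)\to 0$ pointwise for every $x\in M$. By dominated convergence $\int_M g_n\,dq\to 0$, i.e.\ $\int_{M\setminus A_n} d(x,a)\,dq\to 0$. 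Combining with the bound from the previous paragraph gives $d^W(q,(f_n)_*(q))\to 0$, which is the assertion.

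I do not anticipate a serious obstacle here: the lemma is essentially a packaging of Lemma 3.2 with a routine dominated-convergence argument, the only mild subtlety being the measurability bookkeeping for $f_n$ (which is immediate once one notes $A_n$ is Borel) and being careful that the finite-first-moment hypothesis on $q$ is exactly what supplies the dominating integrable function $x\mapsto d(x,a)$. If one prefers to avoid citing dominated convergence directly, one can instead note that $\nu_n(E):=\int_E d(x,a)\,dq$ defines a finite measure on $M$ and $\nu_n(M\setminus A_n)$ decreases to $\nu(\bigcap_n(M\setminus A_n))=\nu(\emptyset)=0$ by continuity from above of the finite measure $\nu$ with $\nu(E)=\int_E d(x,a)\,dq$; this is the same argument in different dress.
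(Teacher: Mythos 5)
Your proposal is correct and follows essentially the same route as the paper: bound $d^W(q,(f_n)_*(q))$ by $\int_{M\setminus A_n} d(x,a)\,dq$ via Lemma \ref{L:upbound}, then send this to $0$ by dominated convergence applied to the integrable function $x\mapsto d(x,a)$. The only cosmetic difference is that the paper applies dominated convergence to $\chi_{A_n}\cdot d(\cdot,a)\to d(\cdot,a)$ and subtracts, whereas you apply it directly to $\mathbf{1}_{M\setminus A_n}\cdot d(\cdot,a)\to 0$; these are the same argument.
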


\begin{proof}
Let $\ve >0$. Since $q\in \mathcal{P}^1(M)$,  $\int_M d(x,a)\, dq(x)<\infty$.  Let $\chi_n$ be the characteristic function for  $A_n$.
By Lebesgue's Dominated Convergence Theorem
\begin{eqnarray*}
\lim_n \int_{A_n} d(x,a)\, dq(x)&=&\lim_n\int_M \chi_n(x)d(x,a)\,dq(x)=\int_M d(x,a)\,dq(x)<\infty, \mbox{ so}\\
\lim_n\int_{M\setminus A_n} d(x,a)\,dq(x)&=&\lim_n \left(\int_M d(x,a)\,dq(x)-\int_{A_n}  d(x,a)\,dq(x)\right)= 0.
\end{eqnarray*}
Thus $\int_{M\setminus A_n} d(x,a)\, dq(x)<\ve$ for all $n$
large enough.  Then
$$\int_M d(x,f_n(x)) dq=\int_{A_n} d(x,x)\,dq+\int_{M\setminus A_n} d(x,a)\, dq(x)<0+\ve=\ve$$
for large $n$.  Then $(f_n)_*(q)\in\Pro^1(M)$ and $\lim_n (f_n)_*(q)= q$
now follow from Lemma \ref{L:upbound}.
\end{proof}

We need to have the freedom to send the complement of $A_n$ into a varying point, but this requires an extra hypothesis.
\begin{lemma}\label{L:a2}
Let $M$ be a metric space, $a\in M$, and let $q\in \mathcal{P}^1(M)$. Let $A_n$ be an increasing sequence of Borel sets such that
$M=\bigcup_n A_n$ and let $\{a_n\}$ be a sequence in $M$.
Define $f_n, g_n: M\to M$ by $f_n(x)=x=g_n(x)$ if $x\in A_n$ and $f_n(x)=a$, $g_n(x)=a_n$ otherwise.
If there exists $K>0$ such that for each $n$, there exists an open
ball $B_{r_n}(a)\subseteq A_n$ such that $Kr_n\geq d(a,a_n)$,
then $\lim_n (g_n)_*(q)= q$  in the Wasserstein
metric and $\lim_n d^W((g_n)_*(q),(f_n)_*(q))=0$.
\end{lemma}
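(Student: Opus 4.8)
The plan is to follow the template of Lemma~\ref{L:a1}, reducing both assertions to the estimation of the integrals $\int_M d(x,g_n(x))\,dq$ and then invoking Lemma~\ref{L:upbound}; the new hypothesis on the balls $B_{r_n}(a)$ is what lets us control $g_n$ on the complement of $A_n$. Since $g_n$ is the identity on $A_n$, we have $\int_M d(x,g_n(x))\,dq = \int_{M\setminus A_n} d(x,a_n)\,dq$. The naive bound $d(x,a_n)\le d(x,a)+d(a,a_n)\le d(x,a)+Kr_n$ is useless, because $A_n\uparrow M$ forces $r_n\to\infty$ whenever $M$ is unbounded; this is the one place where some thought is required.

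The key observation is that the inclusion $B_{r_n}(a)\subseteq A_n$ means every $x\in M\setminus A_n$ lies outside $B_{r_n}(a)$, hence satisfies $d(x,a)\ge r_n$. Therefore the hypothesis $d(a,a_n)\le Kr_n$ upgrades to the pointwise inequality $d(a,a_n)\le K\,d(x,a)$ valid for all $x\in M\setminus A_n$, so that $d(x,a_n)\le (1+K)\,d(x,a)$ on $M\setminus A_n$ and consequently
\[
\int_M d(x,g_n(x))\,dq \;=\; \int_{M\setminus A_n} d(x,a_n)\,dq \;\le\; (1+K)\int_{M\setminus A_n} d(x,a)\,dq .
\]
Since $q\in\mathcal{P}^1(M)$ gives $\int_M d(x,a)\,dq<\infty$ and $A_n\uparrow M$, the Dominated Convergence argument carried out in Lemma~\ref{L:a1} shows $\int_{M\setminus A_n} d(x,a)\,dq\to 0$; hence $\int_M d(x,g_n(x))\,dq\to 0$ and, in particular, this integral is finite for each $n$. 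Lemma~\ref{L:upbound} then yields $(g_n)_*(q)\in\mathcal{P}^1(M)$ together with $d^W(q,(g_n)_*(q))\le \int_M d(x,g_n(x))\,dq\to 0$, which is the first assertion.

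For the second assertion I would simply invoke the triangle inequality for $d^W$: $d^W((g_n)_*(q),(f_n)_*(q))\le d^W((g_n)_*(q),q)+d^W(q,(f_n)_*(q))$, where the first term tends to $0$ by the paragraph above and the second tends to $0$ by Lemma~\ref{L:a1}. (One could instead push $q$ forward under $x\mapsto(g_n(x),f_n(x))$ to get an explicit coupling of $(g_n)_*(q)$ and $(f_n)_*(q)$ of cost $\int_{M\setminus A_n} d(a_n,a)\,dq\le K\int_{M\setminus A_n} d(x,a)\,dq\to 0$, giving the same conclusion directly.) The only genuine obstacle in the whole argument is recognizing that the estimate must be done on $M\setminus A_n\subseteq M\setminus B_{r_n}(a)$, trading the unbounded quantity $r_n$ for the integrable function $d(\cdot,a)$; once that is in hand everything else is routine.
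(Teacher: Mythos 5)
Your proof is correct and follows essentially the same route as the paper's: both exploit the inclusion $M\setminus A_n\subseteq M\setminus B_{r_n}(a)$ to trade the unbounded quantity $d(a,a_n)\le Kr_n$ for the integrable function $d(\cdot,a)$, and both deduce the second assertion from Lemma~\ref{L:a1} via the triangle inequality. The only cosmetic difference is that you apply the estimate pointwise before integrating, whereas the paper integrates first and then bounds $d(a,a_n)\,q(M\setminus A_n)$ using $q(M\setminus A_n)\le r_n^{-1}\int_{M\setminus A_n}d(x,a)\,dq$.
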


\begin{proof} By Lemma \ref{L:a1} $(f_n)_*(q)\in\Pro^1(M)$.
%Define $h_n: M\to M$ by $h_n(a)=a_n$ and $h_n(x)=x$ otherwise, so that $h_nf_n=g_n$.
%\begin{eqnarray*}
%\int_M d(x,h_n(x)) d(f_n)_*(q)&=&\int_M d(f_n(x), h_n(f_n(x)) dq\\
%&=&\int_{A_n}d(x,x) dq +\int_{M\setminus A_n} d(a_n,a) dq=d(a_n,a)q(M\setminus A_n),
%\end{eqnarray*}
%where the first equality follows by change of variables.
Note that by hypothesis $M\setminus A_n\subseteq M\setminus B_{r_n}(a)$,
so $\int_{M\setminus A_n} d(x,a) dq(x)\geq  \int_{M\setminus A_n} r_n\, dq=r_n q(M\setminus A_n)$.
Since $q\in\mathcal{P}^1(M)$, we may choose $A_n$ large enough so
that $\int_{M\setminus A_n} d(x,a)dq(x)<\ve/K$ (see the proof of the previous lemma).
We conclude that $q(M\setminus A_n)< \ve/Kr_n$, so that
$d(a_n,a)q(M\setminus A_n)<Kr_n(\ve/Kr_n)=\ve$. We thus have
\begin{eqnarray*}
\int_M d(x,g_n(x))dq &\leq & \int_{A_n}d(x,x) dq +\int_{M\setminus A_n} d(x,a_n) dq(x)\\
&\leq & 0+\int_{M\setminus A_n} d(x,a) dq(x)+ \int_{M\setminus A_n} d(a,a_n) dq \\
&\leq & \int_{M\setminus A_n} d(x,a) dq(x)+ d(a,a_n) q(M\setminus A_n). \end{eqnarray*}
As $n\to \infty$, the first term goes to $0$ by the proof of the preceding lemma and the
second term goes to $0$ by the earlier part of this proof.  That
$\lim_n (g_n)_*(q)= q$, i.e., $\lim_n d^W(q, (g_n)_*(q))=0$, now follows from Lemma \ref{L:upbound}.
Since also $ \lim_n (f_n)_*(q)=q$ by the preceding lemma, we conclude
$\lim_n d^W((g_n)_*(q),(f_n)_*(q))=0$.
\end{proof}

\section{Approximation in Ordered Spaces}
\begin{definition}
An \emph{ordered metric space} is a metric space $M=(M,d,\leq)$ equipped with a partial order
that is \emph{closed} in the sense that $\{(x,y):x\leq y\}$ is a closed subset of $M\times M$.
\end{definition}
\emph{Throughout this section $M=(M,d,\leq)$ will denote an ordered metric space.}
For
$A\subseteq M$, we define $\ua A=\{y\in M: x\leq y \mbox{ for some } x\in A\}$.  A set $A$ is an
\emph{upper set} if $\ua A=A$.  Lower sets and $\da A$ are defined in a dual fashion (with respect
to the order).  We write $\ua x$ resp.\ $\da x$ for $\ua\{x\}$ resp.\ $\da\{x\}$. An \emph{order
interval} is a set of the form $[x,y]:=\{w\in M: x\leq w\leq y\}=\ua x\cap\da y$.  Having a closed partial
order implies $\ua x$, $\da x$, and $[x,y]$ are all closed sets.

\begin{definition}
For $p,q\in \mathcal{P}(M)$, we define $p\leq q$ if $p(U)\leq q(U)$ for all open upper sets $U$.
\end{definition}
This order is sometimes referred to as the \emph{stochastic} order. The stochastic order simplifies for finitely supported
measures.
\begin{lemma}\label{L:stord}
Suppose that $X$ is a partially ordered topological space for which $\da x$ is closed for eachd $x\in X$.  For finitely supported
measures $\mu,\nu$, we have $\mu\leq \nu$ if and only if $\mu(A)\leq \nu(\ua A)$ for each $A\subseteq$ supp$(\mu)$.
\end{lemma}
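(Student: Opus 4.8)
The plan is to treat the two implications separately. The reverse implication is immediate, while the forward one rests on constructing a single well-chosen open upper set, and that construction is the only place the hypothesis on down-sets is used.

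\textbf{Reverse direction.} Assuming $\mu(A)\leq\nu(\ua A)$ for all $A\subseteq\mathrm{supp}(\mu)$, I would fix an arbitrary open upper set $U$ and set $A:=U\cap\mathrm{supp}(\mu)$. Since $U$ is an upper set containing $A$, the smallest upper set $\ua A$ satisfies $\ua A\subseteq U$, so $\nu(\ua A)\leq\nu(U)$; and since $\mu$ is finitely supported, $\mu(\mathrm{supp}(\mu))=1$, whence $\mu(U)=\mu(U\cap\mathrm{supp}(\mu))=\mu(A)$. Chaining, $\mu(U)=\mu(A)\leq\nu(\ua A)\leq\nu(U)$, i.e.\ $\mu\leq\nu$. (This direction uses neither finiteness of $\mathrm{supp}(\nu)$ nor closedness of down-sets.)

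\textbf{Forward direction.} Assume $\mu\leq\nu$ and fix $A\subseteq\mathrm{supp}(\mu)$. The key step is to produce an open upper set $U$ with $A\subseteq U$ and $U\cap\mathrm{supp}(\nu)\subseteq\ua A$. Granting this, and using $\mu(\mathrm{supp}(\mu))=1=\nu(\mathrm{supp}(\nu))$ together with $\mu\leq\nu$, one gets
\[
\mu(A)\leq\mu(U)\leq\nu(U)=\nu\bigl(U\cap\mathrm{supp}(\nu)\bigr)\leq\nu(\ua A),
\]
which is the desired inequality. To build $U$: because $\nu$ is finitely supported, $F:=\mathrm{supp}(\nu)\setminus\ua A$ is a finite set, and if $F=\emptyset$ then $\mathrm{supp}(\nu)\subseteq\ua A$ and $U:=M$ already works. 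Otherwise, for each $y\in F$ and each $x\in A$ we have $x\not\leq y$ (else $y\in\ua A$), i.e.\ $x\notin\da y$, so $A$ is disjoint from $\da F:=\bigcup_{y\in F}\da y$. By hypothesis each $\da y$ is closed, hence $\da F$ is a closed lower set, so $U:=M\setminus\da F$ is an open upper set; by construction $A\subseteq U$, and since $F\subseteq\da F$ we get $U\cap\mathrm{supp}(\nu)=\mathrm{supp}(\nu)\setminus\da F\subseteq\mathrm{supp}(\nu)\setminus F=\mathrm{supp}(\nu)\cap\ua A\subseteq\ua A$.

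I expect the construction of $U$ to be the only genuine content: the observation that ``$\da y$ closed'' is exactly what lets one separate $A$ from the finitely many atoms of $\nu$ lying outside $\ua A$ by an open upper set. Everything else is routine bookkeeping with the fact that finitely supported measures assign full mass to their (finite) supports, so that passing between $\mu(U)$ and $\mu(U\cap\mathrm{supp}(\mu))$, and likewise for $\nu$, is harmless.
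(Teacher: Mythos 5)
Your proof is correct and follows essentially the same route as the paper: the reverse direction via $A=U\cap\mathrm{supp}(\mu)$, and the forward direction via the open upper set $U=M\setminus\bigcup_{y\in\mathrm{supp}(\nu)\setminus\ua A}\da y$. (Your writing $U\cap\mathrm{supp}(\nu)\subseteq\ua A$ as an inclusion rather than the paper's claimed equality is in fact the more careful statement, and it is all that is needed.)
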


\begin{proof}  Suppose $\mu\leq \nu$ and let $A\subseteq$ supp$(\mu)$.  Let $U$ be the complement of 
$\bigcup\{\da x:x\in\mbox{supp}(\nu)\setminus \ua A\}$.  By hypothesis the finite union is closed and hence $U$ is
open. It is the complement of a lower set, hence an upper set.
Since $\da x\cap A=\emptyset$ for $x\notin \ua A$, we conclude $U$ contains $A$.  Hence 
$$\mu(A)\leq \mu(U)\leq \nu(U)=\nu(U\cap\mbox{supp}(\nu))=\nu(\ua A\cap\mbox{supp}(\nu))=\nu(\ua A).$$ 
Conversely suppose $\mu(A)\leq \nu(\ua A)$ for each $A\subseteq$ supp$(\mu)$. Let $U=\ua U$ be an open
upper set.  Set $A=U\cap\mbox{supp}(\mu)$.  Then  
$\mu(U)=\mu(A)\leq \nu(\ua A)\leq \nu(U)$.
\end{proof}

\begin{proposition}\label{P:ordapprox}
Let  $p,q\in \mathcal{P}^1(M)$ such that $q\leq p$, where $M$ is an ordered metric space.
Let $A_n=[z_n,w_n]$ be an increasing sequence  of order intervals such that
$M=\bigcup_n A_n$. Suppose for some $a\in M$ there exists $K>0$ and a sequence $\{r_n\}$ of positive numbers such that $B_{r_n}(a)\subseteq A_n$
and $d(a,z_n),d(a,w_n)\leq Kr_n$ for each $n$.  Define $f_n$ and $g_n$ by $f_n(x)=x=g_n(x)$ for $x\in A_n$ and $f_n(x)=z_n$,
$g_n(x)=w_n$ otherwise.  Then $(f_n)_*(q)\leq (g_n)_*(p)$ for each $n$ and $\lim_n (f_n)_*(q)=q$ and $\lim_n (g_n)_*(p)=p$.
\end{proposition}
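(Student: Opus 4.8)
The plan is to dispatch the two convergence statements quickly and then concentrate on the order relation. For the convergences I would apply Lemma~\ref{L:a2} twice. First take the measure $q$, the base point $a$, and let $\{z_n\}$ play the role of the sequence $\{a_n\}$; then the map there called $g_n$ is exactly our $f_n$, and the hypotheses $B_{r_n}(a)\subseteq A_n$ and $d(a,z_n)\le Kr_n$ are precisely what Lemma~\ref{L:a2} requires (the $A_n$ are Borel, being order intervals, which are closed since the order is closed), so $(f_n)_*(q)\in\mathcal{P}^1(M)$ and $\lim_n (f_n)_*(q)=q$ in the Wasserstein metric. Applying the lemma again with $p$, $a$, and $\{w_n\}$ gives $(g_n)_*(p)\in\mathcal{P}^1(M)$ and $\lim_n(g_n)_*(p)=p$.

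For the order relation $(f_n)_*(q)\le (g_n)_*(p)$, fix $n$ and abbreviate $A=A_n=[z,w]$ with $z=z_n\le w=w_n$ (the inequality holds because $A\supseteq B_{r_n}(a)\ne\emptyset$), and $f=f_n$, $g=g_n$. Both $f$ and $g$ are Borel measurable, being the identity on the closed set $A$ and constant on its complement, so the push-forwards are defined. By the definition of the stochastic order it suffices to prove $q(f^{-1}(U))\le p(g^{-1}(U))$ for every open upper set $U$. I would start by writing out the preimages: $f^{-1}(U)=(A\cap U)\cup(M\setminus A)$ if $z\in U$, and $f^{-1}(U)=A\cap U$ if $z\notin U$; similarly for $g^{-1}(U)$ with $w$ in place of $z$. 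Note also that $f\le g$ pointwise, so $f^{-1}(U)\subseteq g^{-1}(U)$ always.

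The elementary observation that makes the bookkeeping go through is: if $x\in A\cap U$ then $x\le w$ and $x\in U$, whence $w\in U$ since $U$ is an upper set; consequently $A\cap U=\emptyset$ as soon as $w\notin U$. With this I would split into three cases. (i) If $w\notin U$, then $z\notin U$ too (as $z\le w$), so $A\cap U=\emptyset$ and both preimages are empty; the inequality is trivial. (ii) If $w\in U$ but $z\notin U$, then $f^{-1}(U)=A\cap U\subseteq U$ while $g^{-1}(U)=(A\cap U)\cup(M\setminus A)\supseteq U$, so $q(f^{-1}(U))\le q(U)\le p(U)\le p(g^{-1}(U))$, the middle inequality being $q\le p$. (iii) If $z\in U$, then also $w\in U$, and a short computation gives $f^{-1}(U)=g^{-1}(U)=U\cup(M\setminus A)$; this set is open (a union of the open set $U$ with $M\setminus A$, open because $A$ is closed) and is an upper set — the one point to check being that if $x\notin A$, $x\le y$ and $y\in A$ then $z\le y$, hence $y\in U$ since $z\in U$ — so $q\le p$ delivers the inequality once more.

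I expect case~(iii) to be the only real, if modest, subtlety: the tempting shortcut of declaring $f^{-1}(U)$ an open upper set for all $U$ and invoking $q\le p$ uniformly fails, because when $z\notin U$ the preimage is just $A\cap U$, which is in general neither open nor an upper set; the argument must therefore route through the comparison set $U$ in case~(ii) and through the genuinely open upper set $U\cup(M\setminus A)$ in case~(iii). A secondary care point is getting the matching right in the two applications of Lemma~\ref{L:a2}: its conclusion concerns the map that sends $M\setminus A_n$ to the \emph{varying} point, which is our $f_n$ for the sequence $\{z_n\}$ and our $g_n$ for the sequence $\{w_n\}$.
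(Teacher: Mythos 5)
Your proof is correct and follows essentially the same route as the paper's: the convergence statements come from Lemma~\ref{L:a2} (the paper combines its second conclusion with Lemma~\ref{L:a1}, while you invoke its first conclusion directly, which is equivalent), and the order relation is checked on open upper sets by the same case analysis, with your middle case~(ii) reproducing the paper's key chain $q(f_n^{-1}(U))=q(U\cap A_n)\leq q(U)\leq p(U)\leq p(g_n^{-1}(U))$. Your case~(iii) is handled slightly more elaborately than necessary (the paper simply notes both preimages are all of $M$, so both measures equal $1$), but your argument there is also valid.
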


\begin{proof} Define $F_n:M\to M$ by $F_n(x)= x$ for $x\in A_n$ and $F_n(x)=a$ otherwise.  By Lemma \ref{L:a2}
$\lim_n d^W((g_n)_*(p),(F_n)_*(p))=0$ and by Lemma \ref {L:a1} $\lim_n (F_n)_*(p)=p$. From these two assertions
we conclude that $\lim_n (g_n)_*(p)=p$.  Similarly $\lim_n (F_n)_*(q)=q$ and hence $\lim_n (f_n)_*(q)=q$.

Let $U$ be an open upper set.  By hypothesis $q(U)\leq p(U)$.   For fixed $n$, if $U$ contains $z_{n}$, then
$(f_n)_*(q)(U)=1=(g_n)_*(p)(U)$ and if $U$ misses $[z_n,w_n]$ then $(f_n)_*(q)(U)=0=(g_n)_*(p)(U)$.
Assume $U$ hits $[z_n,w_n]$, but $z_n\notin U$.  Since all the $q$-mass outside $[z_n,w_n]$ is stored
at $z_n$ by $(f_n)_*(q)$, we conclude $(f_n)_*(q)(U)=q(U\cap[z_n,w_n])\leq q(U)$.  Since all the $p$-mass
outside $[z_n,w_n]$ is stored at $w_n$ and $w_n\in U$, we conclude that
\begin{eqnarray*}
(g_n)_*(p)(U)&=&p(U\cap [z_n,w_n])+p(M\setminus [z_n,w_n])\\
&\geq& p(U\cap [z_n,w_n]) +p(U\cap (M\setminus [z_n,w_n]))=p(U).\end{eqnarray*}
We conclude $(f_n)_*(q)\leq (g_n)_*(p)$.
\end{proof}

\begin{lemma}\label{L:b0}
Let $M$ be an ordered metric space and let $q=\sum_{i=1}^n w_i\delta_{x_i}$ be a finitely supported measure.  If $\ve >0$ and $z\leq x_i$ for $1\leq i\leq n$, then
there exists $p=(1/N)\sum_{i=1}^N \delta_{y_i}$  such that $d^W(q,p)<\ve$ and $p\leq q$.  Furthermore, $y_i\in\{z,x_1,\ldots, x_n\}$ for $1\leq i\leq N$.
\end{lemma}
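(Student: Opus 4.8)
The plan is to approximate $q = \sum_{i=1}^n w_i \delta_{x_i}$ by a uniform measure supported on the finite set $\{z, x_1, \ldots, x_n\}$, obtained by rounding the weights $w_i$ down to rational numbers with common denominator $N$ and dumping the leftover mass onto $z$. Concretely, fix a large integer $N$, write $m_i = \lfloor N w_i \rfloor$, and set $m_0 = N - \sum_{i=1}^n m_i \geq 0$; then define $p = \frac{1}{N}\bigl(m_0 \delta_z + \sum_{i=1}^n m_i \delta_{x_i}\bigr)$, which is visibly a uniform finitely supported measure with all its atoms in $\{z, x_1, \ldots, x_n\}$. The two things to check are that $d^W(q,p) < \ve$ for $N$ large and that $p \leq q$ in the stochastic order.

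For the distance estimate, I would exhibit an explicit coupling: transport mass $m_i/N$ from $x_i$ to $x_i$ (zero cost) and transport the deficit $w_i - m_i/N < 1/N$ at each $x_i$ to $z$. The total transported-to-$z$ mass is exactly $m_0/N = \sum_i (w_i - m_i/N) < n/N$, and each such unit of mass travels a distance $d(x_i, z) \leq D := \max_i d(x_i, z)$. Hence this coupling has cost at most $(n/N) D$, giving $d^W(q,p) \leq nD/N < \ve$ once $N > nD/\ve$. (One must note $D < \infty$ since there are finitely many points; if $z = x_i$ for all $i$ the statement is trivial.)

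For the order relation $p \leq q$, I would invoke Lemma 4.6: since $p$ is finitely supported with $\operatorname{supp}(p) \subseteq \{z, x_1, \ldots, x_n\}$, it suffices to show $p(A) \leq q(\ua A)$ for every $A \subseteq \operatorname{supp}(p)$ — though I should be a little careful, as Lemma 4.6 compares $\mu \le \nu$ via subsets of $\operatorname{supp}(\mu)$, so here the role of ``$\mu$'' is $p$. Given such an $A$, split into the case $z \notin A$ and $z \in A$. If $z \notin A$, then $A \subseteq \{x_1, \ldots, x_n\}$ and $p(A) = \frac{1}{N}\sum_{x_i \in A} m_i \leq \sum_{x_i \in A} w_i \leq q(\ua A)$, the last step because each $x_i \in A$ lies in $\ua A$. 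If $z \in A$, then since $z \leq x_i$ for every $i$ we get $\ua A = \ua\{z\} \supseteq \{x_1, \ldots, x_n\} \cup \{z\} \supseteq \operatorname{supp}(q)$, so $q(\ua A) = 1 \geq p(A)$. This establishes $p \leq q$.

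The only genuinely delicate point is making sure the hypotheses of Lemma 4.6 are actually available: that lemma requires $\da x$ closed for each $x$, which is part of the ``ordered metric space'' standing assumption (a closed order gives closed principal ideals), so this is fine. I expect the main obstacle — more bookkeeping than obstacle — to be handling the degenerate possibilities ($z$ equal to some $x_i$, or some $w_i = 0$, or $m_0 = 0$) cleanly, and making sure the atoms of $p$ are genuinely drawn from the prescribed finite set even after these collisions; none of this threatens the argument, it just needs a careful sentence. An alternative to the explicit coupling in the distance step would be to apply Lemma 4.4 (push-forward bound) with the map sending the ``rounded-off'' portion to $z$, but since the weights, not the points, are being perturbed, the direct coupling is cleaner here.
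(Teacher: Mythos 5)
Your proposal is correct and follows essentially the same route as the paper: round the weights down to rationals with common denominator $N$, dump the deficit at the bottom point $z$, and bound $d^W(q,p)$ by the explicit coupling that moves only the deficit mass from each $x_i$ to $z$. The only cosmetic differences are that the paper picks the dyadic approximants first and extracts $N$ afterwards, and it verifies $p\leq q$ directly on open upper sets (same two-case split on whether $z$ lies in the set) rather than routing through Lemma \ref{L:stord}.
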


\begin{proof}  Pick $B>0$ such that $d(x_i,z)\leq B$ for $1\leq i\leq n$.  For each $i$, $1\leq i\leq n$, choose a dyadic rational $r_i=m_i/2^{n_i}\in \mathbb{Q}$
such that $0\leq w_i-r_i<\ve/nB$.  Define $\omega$ on $M\times M$ by $\omega(\{(x_i,x_i)\})=r_i$ and  $\omega(\{(x_i,z)\})= w_i-r_i$ for $1\leq i\leq n$ and
$\omega(\{(x,y)\})=0$ otherwise. Then for $1\leq i\leq n$, $w_i=\omega(\{x_i\}\times M)$ and $0=\omega(\{z\}\times M)$, so the first marginal of
 $\omega$ is $q$.  Further $r_i=\omega(M\times \{x_i\})$ and $1-\sum_{i=1}^n r_i=\omega(M\times\{z\})$.  This gives the marginal $p$, where
 $p(\{x_i\})=r_i$ and $p(\{z\})=1-\sum_{i=1}^n r_i$, and $p(\{x\}) = 0$ for $x \in M \setminus \{z, x_1,\ldots, x_n\}$.  We have
 \begin{eqnarray*}
 d^W(q,p)\leq \int_{M\times M} d(x,y) d\omega=\sum_{i=1}^n d(x_i,x_i) r_i+d(x_i,z)(w_i-r_i)
 < 0+Bn\frac{\ve}{nB}=\ve. \end{eqnarray*}
Since in passing from $q$ to $p$ we have lowered the weight at each $x_i$ and placed the extra weight at the bottom point $z$,
it follows easily that $p(U)\leq q(U)$ for any open upper set, and hence $p\leq q$.

Let $N=\max\{ 2^{n_i}: i=1,\ldots, n\}$.  Then each $r_i=m_i/2^{n_i}=k_i/N$ for some $0\leq k_i\leq N$.
We obtain   $p=(1/N)\sum_{k=1}^N \delta_{y_k}$ where $y_k=x_i$ appears $k_i$ times for each $i$ and
$z$ appears $N-\sum_{i=1}^n k_i$ times.
\end{proof}

\begin{definition}
A partial ordered space $M$  is said to have a basis of neighborhoods consisting of order intervals if given any
open set $U$ and $x\in U$, there exist an open set $V$ and an order interval $[z,w]$ such that $x\in V\subseteq [z,w]
\subseteq U$.
\end{definition}

\begin{theorem}\label{T:b1}
Let $M$ be a metric space equipped with a closed order and having a basis of neighborhoods consisting
of order intervals.  Let $q\in\mathcal{P}^1(M)$ with support supp$(q)$
metrically bounded and contained in the interior of $[z,w]$, an order interval.
Then there  exists a sequence $\{q_n\}\subseteq \Pro_0(M)$ of finitely supported uniform measures with support contained in $[z,w]$
such that in the Wasserstein space $\Pro^1(M)$,  $\lim_n q_n=q$ and for each $n$, $q_n\leq q$ (alternatively $q_n\geq q)$.
\end{theorem}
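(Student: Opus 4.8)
The plan is to reduce to producing, for each fixed $\varepsilon>0$, a single uniform finitely supported measure $p$ with $\mathrm{supp}(p)\subseteq[z,w]$, $p\leq q$, and $d^W(q,p)<\varepsilon$; taking $\varepsilon=1/n$ and $q_n:=p$ then yields the sequence. Note first that $\mathrm{supp}(q)$ is separable by Lemma~\ref{L:sep}, hence Lindel\"of, and that $q(\mathrm{supp}(q))=1$ since $q\in\mathcal{P}^1(M)\subseteq\mathcal{P}(M)$. Fix $D>0$ with $d(x,z)\leq D$ for all $x\in\mathrm{supp}(q)$ (possible as $\mathrm{supp}(q)$ is bounded), and choose $\delta,\eta>0$ with $\delta+D\eta<\varepsilon/2$.

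The first real step is to build a suitable finite Borel decomposition of $\mathrm{supp}(q)$. For each $x\in\mathrm{supp}(q)$ apply the order-interval basis hypothesis with ambient open set $U=B_{\delta/2}(x)\cap\mathrm{int}[z,w]$: this gives an open $V_x$ and an order interval $[z_x,w_x]$ with $x\in V_x\subseteq[z_x,w_x]\subseteq B_{\delta/2}(x)\cap\mathrm{int}[z,w]$, so that $\mathrm{diam}\,[z_x,w_x]\leq\delta$ and $z_x\in\mathrm{int}[z,w]$, whence $z\leq z_x\leq w$. By Lindel\"ofness the cover $\{V_x\}_{x\in\mathrm{supp}(q)}$ has a countable subcover $\{V_{x_i}\}_{i\geq 1}$; set $E_i=\bigl(V_{x_i}\setminus\bigcup_{j<i}V_{x_j}\bigr)\cap\mathrm{supp}(q)$, a Borel set with $E_i\subseteq[z_{x_i},w_{x_i}]$, the $E_i$ pairwise disjoint with union $\mathrm{supp}(q)$. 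Since $\sum_i q(E_i)=q(\mathrm{supp}(q))=1$, choose $k$ with $q\bigl(\bigcup_{i\leq k}E_i\bigr)>1-\eta$.

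Now push forward by the ``rounding-down'' map $h:M\to M$ defined by $h(x)=z_{x_i}$ for $x\in E_i$, $i\leq k$, and $h(x)=z$ otherwise; this is Borel measurable, being constant on finitely many Borel pieces. For $x\in E_i$ ($i\leq k$) we have $h(x)=z_{x_i}\leq x$ since $E_i\subseteq[z_{x_i},w_{x_i}]$, and for $x\in\mathrm{supp}(q)\setminus\bigcup_{i\leq k}E_i$ we have $h(x)=z\leq x$ since $\mathrm{supp}(q)\subseteq[z,w]$; thus $h(x)\leq x$ for $q$-a.e.\ $x$. Consequently, for any open upper set $U$, $x\in h^{-1}(U)$ forces $x\geq h(x)\in U$ and so $x\in U$; hence $h^{-1}(U)\subseteq U$ up to a $q$-null set and $h_*(q)(U)=q(h^{-1}(U))\leq q(U)$, i.e.\ $q':=h_*(q)\leq q$. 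Also $d(x,h(x))\leq\mathrm{diam}\,[z_{x_i},w_{x_i}]\leq\delta$ on $E_i$ ($i\leq k$) and $d(x,h(x))=d(x,z)\leq D$ elsewhere, so by Lemma~\ref{L:upbound}
$$d^W(q,q')\leq\int_M d(x,h(x))\,dq\leq\delta\, q\Bigl(\bigcup_{i\leq k}E_i\Bigr)+D\,q\Bigl(M\setminus\bigcup_{i\leq k}E_i\Bigr)<\delta+D\eta<\tfrac{\varepsilon}{2}.$$
The measure $q'$ is finitely supported with $\mathrm{supp}(q')\subseteq\{z,z_{x_1},\dots,z_{x_k}\}\subseteq[z,w]$, and $z\leq z_{x_i}$ for each $i$. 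Applying Lemma~\ref{L:b0} to $q'$ with bottom point $z$ and tolerance $\varepsilon/2$ produces $p=(1/N)\sum_{j=1}^{N}\delta_{y_j}\in\mathcal{P}_0(M)$ with $p\leq q'$, $d^W(q',p)<\varepsilon/2$, and each $y_j\in\{z,z_{x_1},\dots,z_{x_k}\}\subseteq[z,w]$. Then $p\leq q'\leq q$ by transitivity of the stochastic order and $d^W(q,p)\leq d^W(q,q')+d^W(q',p)<\varepsilon$, as required. The alternative $q_n\geq q$ follows by the dual construction: send each $E_i$ ($i\leq k$) to $w_{x_i}$ and the remainder to $w$, so $h(x)\geq x$ $q$-a.e.\ and hence $q\leq h_*(q)$ by the argument above with ``upper'' replaced by ``lower'', then apply the order dual of Lemma~\ref{L:b0} with top point $w$, rounding weights down and depositing the surplus at $w$.

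The only delicate point—and the place the hypothesis is genuinely used—is the construction of the decomposition: one must invoke the order-interval basis with a \emph{small metric ball} as the ambient open set, so that the order intervals $[z_x,w_x]$ themselves (not merely the $V_x$) have diameter at most $\delta$; this is precisely what makes $d(x,h(x))$ uniformly small on each $E_i$ and hence lets Lemma~\ref{L:upbound} deliver the Wasserstein estimate. Everything else—Lindel\"ofness from Lemma~\ref{L:sep}, disjointification, the elementary implication ``$h(x)\leq x$ $q$-a.e.\ $\Rightarrow h_*(q)\leq q$'', the tail bound from boundedness of $\mathrm{supp}(q)$, and the final uniformization via Lemma~\ref{L:b0}—is routine bookkeeping.
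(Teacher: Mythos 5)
Your proposal is correct and follows essentially the same route as the paper: cover $\mathrm{supp}(q)$ by small order intervals inside $\mathrm{int}[z,w]$ using separability (Lemma~\ref{L:sep}), disjointify, truncate to finitely many pieces via $q(\mathrm{supp}(q))=1$ and boundedness, push forward by the map sending each piece to the bottom point of its interval (and the tail to $z$) to get a finitely supported minorant within $\varepsilon/2$ by Lemma~\ref{L:upbound}, and then uniformize with Lemma~\ref{L:b0}. The only differences from the paper's proof are bookkeeping (two parameters $\delta,\eta$ in place of the paper's $\varepsilon/4$ and $\varepsilon/2b$, and intersecting the pieces with $\mathrm{supp}(q)$), so no further comment is needed.
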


\begin{proof}   The case $q$ is a point mass is trivial (let $q_n=q$), so we assume that is not the case.
Let $(z,w)$  denote the interior of $[z,w]$, the largest open set contained in $[z,w]$.
We construct the desired sequence in two steps.\\
\emph{Step 1}.  Let $\ve>0$.  For each $x\in\mathrm{supp}(q)$, pick an order interval $[z_x,w_x]$ and an open set $V_x$  such that
$x\in V_x\subseteq [z_x,w_x]\subseteq B_r(x)\cap (z,w)$, where $r=\ve/4$.  Since supp$(q)$ is separable by Lemma \ref{L:sep},
hence second countable, countably many
of the $\{V_x\}$, say $\{V_i\}_{i\in\mathbb N}$, cover supp$(q)$.  Let $[z_i,w_i]$ denote the corresponding order interval containing
$V_i$.

Define a sequence of sets $A_i$ inductively by $A_1=V_1$, $A_{k+1}=V_{k+1}\setminus \bigcup_{i=1}^k V_i$.  We delete those
$A_k$ that are empty, and renumber accordingly.  The collection $\{A_k\}$ forms a partition  of $\bigcup_i V_i$.  In particular,
$\bigcup_k A_k$ contains supp$(q)$, and hence has measure $1$ (by our definition of $\mathcal{P}(M)$). 
Since the collection $\{A_k\}$ is a partition, it follows that
$1=q(\bigcup_k A_k)=\sum_{k=1}^\infty q(A_k)$.   Since the diameter of supp$(q)$ is assumed finite, we can pick $b>\ve$ that is
greater than the diameter of $\{z\}\cup \mathrm{supp}(q)$.  We can also  pick $m$ large enough so that
$\sum_{k=1}^m q(A_k)>1-\ve/2b$, which means $q(\mathrm{supp}(q)\setminus \bigcup_{k=1}^m A_k)<\ve/2b$.
Denote $\mathrm{supp}(q)\setminus \bigcup_{k=1}^m A_k$ by $D$.

 We define $f:M\to M$ by $f(x)=z_i$ if $x\in A_i$ for $1\leq i\leq m$  and $f(x)=z$ otherwise.
Then $f(x)\leq x$ for each $x\in[z,w]$ and $f$ is Borel measurable since $f(M)$ is finite
and the inverse image of each point is a Borel set.   We note also for $x$ in $A_k$ for $1\leq k\leq m$ that
$d(x,f(x))=d(x, z_k)<\ve/2$ since each $[z_k,w_k]$ is contained in an open ball of radius $\ve/4$. We compute
\begin{eqnarray*}
\int_M d(x,f(x))dq &=& \sum_{k=1}^m\int _{A_k} d(x,f(x)) dq+\int_D d(x,f(x)) dq\\
&\leq & \sum_{k=1}^m \frac{\ve}{2} q(A_k)+b q(D)\\
&\leq & \frac{\ve}{2} +b\frac{\ve}{2b}=\ve.
\end{eqnarray*}
We conclude from Lemma \ref{L:upbound} that $d^W(q, f_*(q))<\ve$.

For each $U=\ua U$ open in $M$, $f^{-1}(U)\cap [z,w]\subseteq U$ since $f(x)\leq x$ for $x\in [z,w]$.  We thus have
$$f_*(q)(U)=q[f^{-1}(U)] =q(f^{-1}(U)\cap [z,w])\leq q(U),$$
where the second equality holds since supp$(q)\subseteq [z,w]$.  Thus $f_*(q)\leq q$.

\emph{Step 2}.  We construct as in  step 1 for $n\in\mathbb{N}$ and $\ve=1/n$ a function $f_n:M\to M$ with finite image
contained in $[z,w]$ such that  $d^W(q, (f_n)_*q)<1/n$, $(f_n)_*(q)\leq q$.  Thus the sequence $(f_n)_*(q)$ converges to $q$ in
$\Pro^1(M)$ equipped with the Wasserstein metric.   Since $(f_n)_*(q)$ is  finitely supported measure, by Lemma
\ref{L:b0} we may pick $q_n\in\Pro_0(M)$ such that $q_n\leq (f_n)_*(q)$ and $d^W(q_n,(f_n)_*(q))<1/n$.  Then it must be the case
that $\lim_n q_n=q$ since $\lim_n (f_n)_*(q)= q$.  We have further for each $n$ that $q_n\leq (f_n)_*(q)\leq q$.  Lemma
\ref{L:b0} allows us to assume that supp$(q_n)\subset$ supp$(f_n)_*(q)\subseteq [z,w]$.

\end{proof}

We come now to our main approximation theorem for ordered spaces.
\begin{theorem}\label{T:ordpair}
Let $M$ be a metric space equipped with a closed order and having a basis of neighborhoods consisting
of order intervals. Let $A_n=[z_n,w_n]$ be an increasing sequence  of metrically bounded
order intervals such that  $A_n$ is contained
in the interior of $A_{n+1}$ for each $n$ and such that
$M=\bigcup_n A_n$. Suppose for some $a\in M$ there exists $K>0$ and a sequence $\{r_n\}$ of positive numbers
such that $B_{r_n}(a)\subseteq A_n$  and $d(a,z_n),d(a,w_n)\leq Kr_n$ for each $n$. Then for $p,q\in \mathcal{P}^1(M)$
with $q\leq p$, there exist sequences $\{q_n\},\{p_n\}\subseteq \Pro_0(M)$ such that
$q_n\to q$ and $p_n\to p$ in the Wasserstein distance and $q_n\leq p_n$ for each $n$.
\end{theorem}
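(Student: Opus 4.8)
The plan is to reduce the general pair $q\le p$ to a pair of boundedly supported truncations by means of Proposition~\ref{P:ordapprox}, and then to round each truncation to a uniform finitely supported measure --- the smaller member downward and the larger member upward --- using Theorem~\ref{T:b1}, splicing the two resulting one-sided approximations into a single order chain. Transitivity of the stochastic order and the triangle inequality for $d^W$ then finish the job.

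First I would observe that the hypotheses of Theorem~\ref{T:ordpair} contain verbatim those of Proposition~\ref{P:ordapprox}, so I may form the maps $f_n,g_n$ of that proposition ($f_n(x)=x=g_n(x)$ for $x\in A_n$, and $f_n(x)=z_n$, $g_n(x)=w_n$ otherwise) and set $\hat q_n:=(f_n)_*(q)$, $\hat p_n:=(g_n)_*(p)$. Proposition~\ref{P:ordapprox} then supplies $\hat q_n\le\hat p_n$ for every $n$, together with $\hat q_n\to q$ and $\hat p_n\to p$ in the Wasserstein metric. Since $f_n$ and $g_n$ take values in $A_n$, the measures $\hat q_n$ and $\hat p_n$ are finitely supported --- hence lie in $\mathcal{P}^1(M)$ --- with support contained in the metrically bounded order interval $A_n=[z_n,w_n]$.

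Next I would bring in the additional hypothesis $A_n\subseteq\mathrm{int}(A_{n+1})$ of Theorem~\ref{T:ordpair}, which is exactly the hook needed for the next step: it forces $\mathrm{supp}(\hat q_n)\subseteq A_n\subseteq\mathrm{int}[z_{n+1},w_{n+1}]$, and likewise for $\hat p_n$. Thus $\hat q_n$ satisfies the hypotheses of Theorem~\ref{T:b1} with the order interval $[z_{n+1},w_{n+1}]$, and the ``$\le$'' conclusion of that theorem lets me choose $q_n\in\mathcal{P}_0(M)$ with $q_n\le\hat q_n$ and $d^W(q_n,\hat q_n)<1/n$; applying instead the ``alternatively $\ge$'' conclusion of Theorem~\ref{T:b1} to $\hat p_n$ lets me choose $p_n\in\mathcal{P}_0(M)$ with $p_n\ge\hat p_n$ and $d^W(p_n,\hat p_n)<1/n$. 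Transitivity of the stochastic order then gives $q_n\le\hat q_n\le\hat p_n\le p_n$, while the triangle inequality
\[
d^W(q_n,q)\le d^W(q_n,\hat q_n)+d^W(\hat q_n,q)<\tfrac1n+d^W(\hat q_n,q),
\]
together with the analogous estimate for $p_n$, forces $q_n\to q$ and $p_n\to p$ in $\mathcal{P}^1(M)$, which is what is required.

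I expect no serious obstacle here beyond correctly aligning the machinery already in place: the one point requiring care is recognizing which hypothesis does which job --- that the chain $A_n\subseteq\mathrm{int}(A_{n+1})$ is precisely what makes the boundedly supported truncations of the first step admissible inputs to Theorem~\ref{T:b1} --- and that one must use the lower variant of Theorem~\ref{T:b1} on $q$ but the upper (``$\ge$'') variant on $p$, so that the two approximations nest correctly into $q_n\le\hat q_n\le\hat p_n\le p_n$.
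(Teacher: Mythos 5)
Your proof is correct and follows essentially the same route as the paper's: truncate via Proposition~\ref{P:ordapprox}, then round the truncation of $q$ downward and that of $p$ upward via Theorem~\ref{T:b1} and its order dual, with the hypothesis $A_n\subseteq\mathrm{int}(A_{n+1})$ making the truncations admissible inputs, and finish by transitivity and the triangle inequality. One small slip: $(f_n)_*(q)$ and $(g_n)_*(p)$ are boundedly supported but not finitely supported (since $f_n$ and $g_n$ restrict to the identity on $A_n$), though this is harmless because Theorem~\ref{T:b1} requires only that the support be metrically bounded and contained in the interior of an order interval.
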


\begin{proof}
By Proposition \ref{P:ordapprox} there exists sequences $(f_n)_*(q)\to q$ and $(g_n)_*(p)\to p$ in the Wasserstein space
$\mathcal{P}^1(M)$ such that   $(f_n)_*(q)\leq (g_n)_*(p)$ for each $n$.  By definition $(f_n)_*(q)$ and $(g_n)_*(p)$ have
 support contained in the order interval $[z_n,w_n]$ and hence the support is metrically bounded and contained
 in the interior of $[z_{n+1},w_{n+1}]$.
 By Theorem \ref{T:b1} and its order dual we may for each $n$ pick $q_n,p_n\in\Pro_0(M)$
 with support contained in $[z_{n+1},w_{n+1}]$ so that
 $$d^W(q_n,(f_n)_*(q))<d^W((f_n)_*(q),q)+1/n,~~~ d^W(p_n,(g_n)_*(p))< d^W((g_n)_*(p), p)+1/n$$
and  $q_n\leq (f_n)_*(q)\leq (g_n)_*(p)\leq p_n$
for each $n$.  Since $(f_n)_*(q)\to q$ and $(g_n)_*(p)\to p$,
it follows that $p_n\to p$ and $q_n\to q$, yielding the desired sequences.
\end{proof}

\section{The Thompson Metric}
In this section we consider an important metric setting where the conditions of Theorem \ref{T:ordpair} are 
satisfied, an example that  served as an important motivation for the previous work.  
Let $V$ be a real Banach space and let $\Omega$ denote
 a non-empty open
convex cone of $V$: $t\Omega\subset \Omega$ for all $t>0,$
$\Omega+\Omega\subset \Omega$, and ${\overline \Omega}\cap
-{\overline \Omega}=\{0\},$ where $\overline \Omega$ denotes the
closure of $\Omega.$   We consider the partial order on $V$ defined
by $$x\leq y \ \ {\mathrm{if\ and\ only\ if}}\ y-x\in {\overline
\Omega}.$$ We write $x<y$ if $y-x\in\Omega.$ We further assume that
$\Omega$ is a \emph{normal} cone: there exists a constant $K$ with
$||x||\leq K||y||$ for all $x,y\in \Omega$ with $x\leq y.$ For
$x\leq y,$ we denote by $[x,y]$ the closed order interval
$$[x,y]:=\{z\in V: x\leq z\leq y\}$$ and the open order interval
by  $(x,y)=\{z\in V: x<z<y\}$ whenever $x<y.$

Any member $a$ of $\Omega$ is an order unit for the ordered space
$(V,\leq)$, and hence $|x|_{a}:=\inf\{\lambda>0: -\lambda a\leq
x\leq \lambda a\}$ defines a norm. By Proposition 1.1 in
\cite{Nu94}, for a normal cone $\overline \Omega$, the order unit
norm $|\cdot|_{a}$ is equivalent to $||\cdot||.$

A. C. Thompson \cite{Th} (cf. \cite{Nu}, \cite{Nu94}) has proved
that $\Omega$ is a complete metric space with respect to the
\emph{Thompson part metric} defined by
$$d(x,y)={\mathrm{max}}\{\log M(x/y), \log M(y/x)\}$$ where
$M(x/y):={\mathrm{inf}}\{\lambda>0: x\leq \lambda y\}=|x|_{y}.$
(Note that $x\leq M(x/y)y$ by the closedness of the relation $\leq$.)
%We note that $M(x/y)$ is defined  for $(x,y)\in V\times\Omega.$
Furthermore, the topology induced by the Thompson metric
agrees with the relative Banach space topology.

\begin{remark}\label{R:Tballs}
First we note for $a\in\Omega$ and $r>0$ that
$$d(a,x)\leq r\Leftrightarrow \,a\leq e^r x,\, x\leq e^r a\Leftrightarrow e^{-r}a\leq x\leq e^r a,$$
so the closed ball  around $a$ of radius $r$ (in the Thompson metric)
is the closed order interval $[e^{-r}a,e^ra]$.  Since the open
ball $B_r(a)$ around $a$ of radius $r$ is is the union of all closed balls of strictly smaller
radii, we have $B_r(a)=\bigcup_{0<t<r} [e^{-t}a,e^ta]$.

\end{remark}

\begin{proposition}\label{P:Thomp}
Let $V$ be a real  Banach space and let $\Omega$ be
a non-empty open normal cone equipped with the Thompson metric.
For  $p,q\in \mathcal{P}^1(\Omega)$
with $q\leq p$, there exist sequences $\{q_n\},\{p_n\}\subseteq \Pro_0(\Omega)$ such that
$q_n\to q$ and $p_n\to p$ in the Wasserstein distance and $q_n\leq p_n$ for each $n$.
\end{proposition}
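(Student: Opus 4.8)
The plan is to derive the proposition directly from Theorem \ref{T:ordpair} by producing the exhausting sequence of order intervals that theorem requires. Fix a point $a\in\Omega$. By Remark \ref{R:Tballs} the closed Thompson ball of radius $n$ about $a$ is exactly the order interval $A_n:=[e^{-n}a,e^na]$, and the open balls satisfy $B_n(a)\subseteq A_n\subseteq B_{n+1}(a)$. From this chain of inclusions everything needed is immediate: the $A_n$ form an increasing sequence of order intervals; each $A_n$ is metrically bounded, being contained in $B_{n+1}(a)$; since $B_{n+1}(a)$ is open and $A_n\subseteq B_{n+1}(a)\subseteq A_{n+1}$, the interval $A_n$ lies in the interior of $A_{n+1}$; and $\bigcup_n A_n=\bigcup_n B_n(a)=\Omega$ because every member of an open cone is an order unit, so $M(x/a)$ and $M(a/x)$ are finite and hence $d(a,x)<\infty$ for all $x\in\Omega$.

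Next I would check the two metric side conditions of Theorem \ref{T:ordpair} with $r_n:=n$ and $K:=1$. First, $B_{r_n}(a)=B_n(a)\subseteq A_n$ by the above. Second, writing $z_n=e^{-n}a$ and $w_n=e^na$, the definition of the Thompson metric gives $M(w_n/a)=e^n$ and $M(a/w_n)=e^{-n}$, so $d(a,w_n)=\max\{\log e^n,\log e^{-n}\}=n=Kr_n$, and symmetrically $d(a,z_n)=n=Kr_n$. It remains to note that $(\Omega,d,\leq)$ is an ordered metric space in the sense of Section~4: the Thompson topology coincides with the relative norm topology, and the graph of $\leq$ is $\{(x,y):y-x\in\overline\Omega\}\cap(\Omega\times\Omega)$, which is closed in $\Omega\times\Omega$ because $\overline\Omega$ is closed in $V$.

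Finally I would verify the basis-of-order-intervals hypothesis. Given an open $U\subseteq\Omega$ and $x\in U$, pick $r>0$ with $B_{2r}(x)\subseteq U$; then $V_x:=B_r(x)$ is open, and by Remark \ref{R:Tballs} we have $x\in B_r(x)\subseteq[e^{-r}x,e^rx]$, while $[e^{-r}x,e^rx]$ is the closed ball of radius $r$ about $x$, hence contained in $B_{2r}(x)\subseteq U$. Thus $x\in V_x\subseteq[e^{-r}x,e^rx]\subseteq U$, as required. With all hypotheses of Theorem \ref{T:ordpair} verified, the conclusion—sequences $\{q_n\},\{p_n\}\subseteq\mathcal{P}_0(\Omega)$ with $q_n\to q$, $p_n\to p$ in the Wasserstein distance and $q_n\leq p_n$ for each $n$—follows at once. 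I do not expect a genuine obstacle here: the whole content is the observation, packaged in Remark \ref{R:Tballs}, that Thompson balls centred at a fixed point are nested order intervals whose radius equals the Thompson distance to the centre, so the abstract hypotheses of Theorem \ref{T:ordpair} hold verbatim. The only points needing a little care are that $\Omega$ need not be separable (irrelevant, as Theorem \ref{T:ordpair} assumes no separability) and the bookkeeping between strict and non-strict ball inclusions.
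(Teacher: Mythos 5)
Your proof is correct and follows exactly the paper's route: apply Theorem \ref{T:ordpair} with $A_n=[e^{-n}a,e^na]$, $r_n=n$, $K=1$, using Remark \ref{R:Tballs} to identify Thompson balls with order intervals. You merely spell out some hypotheses (closedness of the order, the order-interval neighborhood basis, the computation $d(a,e^{\pm n}a)=n$) that the paper leaves implicit, and these verifications are all accurate.
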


\begin{proof}  Let $a\in\Omega$. We verify the hypotheses of Theorem \ref{T:ordpair} are met.
We let $A_n=[e^{-n}a,e^{n}a]$, the closed ball of radius $n$ around $a$. By the preceding
remark $A_n\subseteq  B_{n+1}(a)\subseteq A_{n+1}$.  Since $a$ is an order unit for $\Omega$,
$\Omega=\bigcup_n A_n$.  Choosing $r_n=n$  and $K=1$, we see by the preceding remark that
the conditions of Theorem \ref{T:ordpair} are satisfied, and hence the proposition follows.
\end{proof}

\begin{example}\label{E:Cstar}
Let ${\mathbb A}$ be a unital $C^{*}$-algebra with
identity $e,$ and let ${\mathbb A}^{+}$ be the set of positive
invertible elements of ${\mathbb A}.$  It is standard that
 ${\mathbb A}^{+}$ is a normal open cone of the Banach
 subspace ${\mathcal H}(\mathbb{A})$ of self-adjoint elements.
 Thus the conclusion of Proposition \ref{P:Thomp} applies.
 In particular the conclusion holds for the open cone
 $\mathbb{P}_n$ of positive definite $n\times n$-matrices
 equipped with the Thompson metric.
\end{example}

\section{Means and Barycenters}
We begin this section by recalling several needed notions and results from Section 3 of \cite{LL16}. 

\begin{definition}\label{D:mb}
(1) An \emph{$n$-mean} $G_n$ on a set $X$ for $n\geq 1$ 
is a function $G_n:X^n\to X$ that is idempotent in the sense that $G_n(x,\ldots,x)=x$ for all  $x\in X$. 
\newline
(2) An $n$-mean  $G_n$ is \emph{symmetric} or \emph{permutation invariant} if  for each permuation $\sigma$
of $\{1,\ldots, n\}$, $G_n(\Bx_\sigma) =G_{n}(\Bx)$, where $\Bx=(x_1,\ldots,x_n)$ and 
${\Bx}_{\sigma}=(x_{\sigma(1)},\dots,x_{\sigma(n)})$ . A (symmetric) \emph{mean} $G$ on $X$ is a sequence of   means $\{G_n\}$,  
one (symmetric) mean for each $n\geq 1$.  
\newline (3) A \emph{barycentric
map} or \emph{barycenter} on the finitely supported uniform measures $\mathcal{P}_0(X)$ is a map 
$\beta: \mathcal{P}_0(X)\to X$ satisfying $\beta(\delta_x)=x$ for each $x\in X$.
\end{definition}

For  ${\bf x}=(x_{1},\dots,x_{n})\in X^{n},$ we let
 \begin{equation}\label{E:1}
 {\bf x}^{k}=(x_{1},\dots,x_{n}, x_{1},\dots,x_{n},\dots, x_{1},\dots,x_{n})\in X^{nk},
 \end{equation}  where the number of blocks is $k.$
 We define the \emph{carrier} $S(\mathbf{x})$ of $\Bx$ to be the set of entries in $\mathbf{x}$, i.e., 
 the smallest finite subset $F$ such that $\mathbf{x}\in F^n$.  We set $[\Bx]$ equal to the
equivalence class of all $n$-tuples obtained by permuting the coordinates of $\Bx=(x_1,\dots,x_n)$.  Note that the 
operation $[\Bx]^k=[\Bx^k]$ is well-defined and that all members of $[\Bx]$ all have the same carrier set $S(\Bx)$.  

A tuple $\Bx=(x_1,\ldots,x_n)\in X^n$ \emph{induces} a finitely supported probability measure $\mu$ 
on $S(\Bx)$ by $\mu=\sum_{i=1}^n (1/n) \delta_{x_i}$, where $\delta_{x_i}$ is the point measure of mass 1 at $x_i$.
Since the tuple may contain repetitions of some of its entries, each singleton set  
$\{x\}$ for $x\in\{x_1,\ldots,x_n\}$ will have measure $k/n$, where $k$ is the number
of times that it appears in the listing $x_1,\ldots, x_n$.  Note that every 
member of $[\Bx]$ induces the same finitely supported probability measure.  

\begin{lemma}\label{L:induced}
For each probability measure $\mu$ on $X$ with finite support $F$ for which $\mu(x)(=\mu(\{x\}))$ is rational for each $x\in F$, 
there exists a unique $[\Bx]$ inducing $\mu$ such that any  $[\By]$ inducing $\mu$ is equal to $[\Bx]^k$ for some $k\geq 1$.
\end{lemma}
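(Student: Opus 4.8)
The plan is to use the clearing-of-denominators construction: given the finite support $F=\{x_1,\dots,x_m\}$ with $\mu(x_j)=p_j/q_j$ in lowest terms, let $n$ be the least common multiple of $q_1,\dots,q_m$, so that $\mu(x_j)=k_j/n$ with $k_j\in\mathbb{N}$ and $\sum_j k_j=n$. Define $\Bx$ to be (any listing of) the tuple in which each $x_j$ appears exactly $k_j$ times; then $\Bx\in X^n$ induces $\mu$, and $[\Bx]$ is well-defined independent of the listing chosen. This supplies existence. For the structural claim, suppose $[\By]$ is another class inducing $\mu$, with $\By\in X^N$. Since $\By$ induces $\mu$, every entry of $\By$ lies in $F$ and each $x_j$ occurs exactly $\mu(x_j)\cdot N = (k_j/n)N$ times; in particular $(k_j/n)N$ is an integer for every $j$, so $n\mid N$ (here one uses that $\gcd(k_1,\dots,k_m)=1$, which follows from $n$ being the \emph{least} common multiple of the $q_j$ — I would check this elementary number-theoretic point carefully). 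Writing $N=nk$, the entry $x_j$ appears $k_j k$ times in $\By$, which is exactly its multiplicity in $\Bx^k$ (each of the $k$ blocks of $\Bx^k$ contains $x_j$ with multiplicity $k_j$). Hence $\By$ and $\Bx^k$ have the same entries with the same multiplicities, so they differ by a permutation of coordinates, i.e.\ $[\By]=[\Bx^k]=[\Bx]^k$.

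For uniqueness of $[\Bx]$ itself: if $[\Bx']$ also has the minimality property, then applying the property of $[\Bx]$ to $\By:=\Bx'$ gives $[\Bx']=[\Bx]^{k}$ for some $k\ge1$, and symmetrically $[\Bx]=[\Bx']^{k'}$; combining, $[\Bx]=[\Bx]^{kk'}$, which forces $kk'=1$ by comparing tuple lengths ($n = nkk'$), hence $k=k'=1$ and $[\Bx']=[\Bx]$. So the representative of minimal length is the canonical one, and it is characterized exactly as in the statement.

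I expect the only genuine subtlety to be the number-theoretic step: verifying that the multiplicity vector $(k_1,\dots,k_m)$ of the minimal tuple is \emph{primitive} (has $\gcd$ equal to $1$), since this is what makes $n$ minimal among the lengths of tuples inducing $\mu$ and drives the divisibility $n\mid N$. This is straightforward — if a common prime $\ell$ divided all $k_j$, then $\sum k_j/\ell = n/\ell$ would be an integer and the tuple with multiplicities $k_j/\ell$ (length $n/\ell$) would also induce $\mu$, and one shows this contradicts the construction of $n$ as the lcm of the reduced denominators $q_j$ — but it deserves to be spelled out rather than waved through. Everything else is bookkeeping about multiplicities of entries in tuples and the definition $[\Bx]^k=[\Bx^k]$ already recorded before the lemma.
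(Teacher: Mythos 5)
Your proposal is correct; note that the paper itself gives no proof of this lemma (it is recalled from Section 3 of [LL16] and stated without argument), so there is nothing to compare against except the construction it presupposes, and your clearing-of-denominators argument is exactly the intended one. The one point you flag as needing care --- primitivity of the multiplicity vector $(k_1,\dots,k_m)$ --- does hold, and your sketched verification works: if a prime $\ell$ divided every $k_j$, then $p_j/q_j=(k_j/\ell)/(n/\ell)$ in lowest terms forces $q_j\mid n/\ell$ for all $j$, contradicting $n=\mathrm{lcm}(q_1,\dots,q_m)$. You could, however, bypass that step entirely: since $x_j$ occurs $p_jN/q_j$ times in $\By$, this count being an integer together with $\gcd(p_j,q_j)=1$ gives $q_j\mid N$ directly for each $j$, hence $n=\mathrm{lcm}(q_1,\dots,q_m)\mid N$, and the rest of your multiplicity bookkeeping and the length-comparison uniqueness argument go through unchanged.
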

 
\begin{definition} A mean $G=\{G_{n}\}$ on $X$ is said to be \emph{multiplicative}  if for all  $n,k\geq 2$ and  
all ${\bf x}=(x_{1},\dots,x_{n})\in X^{n}$,
$$G_{n}({\bf  x})=G_{nk}({\bf x}^{k}).$$
If $G$ is also symmetric, then $G$ is called \emph{intrinsic}.
\end{definition}
We have the following corollary to Lemma \ref{L:induced}.
\begin{corollary}\label{C:induced}
Let $G$ be an intrinsic mean.  Then for any finitely supported probability measure $\mu$ with support $F$ and taking on
rational values, we may define $\beta_G(\mu)=G_n(\Bx)$, for any $\Bx\in F^n$ that induces $\mu$.  
\end{corollary}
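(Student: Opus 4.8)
The plan is to read this as purely a well-definedness statement: a fixed $\mu$ is induced by infinitely many tuples, of varying lengths, and the claim is that $G_n(\Bx)$ returns the same element of $X$ for all of them. The only real input is Lemma \ref{L:induced}, which hands us a canonical inducing class, together with the two clauses in the definition of an intrinsic mean (symmetry and multiplicativity). So the proof will be a short reduction.

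Concretely, I would first observe that if $\Bx \in F^n$ induces $\mu$ and $\mathrm{supp}(\mu) = F$, then necessarily $S(\Bx) = F$: each $x \in F$ has $\mu(\{x\}) > 0$, hence occurs among the coordinates of $\Bx$. Thus every tuple inducing $\mu$ falls within the scope of Lemma \ref{L:induced}, and we may fix the unique class $[\Bx_0]$, say with $\Bx_0 \in F^{n_0}$, inducing $\mu$ with the property that every class $[\By]$ inducing $\mu$ satisfies $[\By] = [\Bx_0]^k = [\Bx_0^{\,k}]$ for some $k \geq 1$; in particular such a $\By$ has length $n_0 k$ and is a coordinate permutation of $\Bx_0^{\,k}$. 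Now, given $\Bx \in F^n$ and $\By \in F^m$ both inducing $\mu$, write $[\Bx] = [\Bx_0^{\,k}]$ and $[\By] = [\Bx_0^{\,\ell}]$ with $n = n_0 k$ and $m = n_0 \ell$. Since $G$ is symmetric, $G_n$ is constant on $[\Bx]$, so $G_n(\Bx) = G_{n_0 k}(\Bx_0^{\,k})$; since $G$ is multiplicative, $G_{n_0 k}(\Bx_0^{\,k}) = G_{n_0}(\Bx_0)$. The same computation applied to $\By$ gives $G_m(\By) = G_{n_0}(\Bx_0)$, whence $G_n(\Bx) = G_m(\By)$. Therefore $\beta_G(\mu) := G_n(\Bx)$ is independent of the inducing tuple and well defined. (For completeness I would note $\beta_G(\delta_x) = G_1(x) = x$ by idempotency, so $\beta_G$ is indeed a barycentric map on the rational-valued finitely supported measures.)

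I do not expect any genuine obstacle here: all the combinatorial content has been packaged into Lemma \ref{L:induced}, and what remains is the two-step passage $\By \mapsto \Bx_0^{\,k}$ (by symmetry) and $\Bx_0^{\,k} \mapsto \Bx_0$ (by multiplicativity). The only point demanding a moment's attention is the remark that an inducing tuple for $\mu$ automatically has carrier exactly $F$, which is what lets both $\Bx$ and $\By$ reduce to the \emph{same} canonical representative $[\Bx_0]$.
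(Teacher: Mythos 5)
Your proof is correct and is essentially the argument the paper intends (the paper states the corollary without proof, as an immediate consequence of Lemma \ref{L:induced} together with symmetry and multiplicativity): you reduce any two inducing tuples to the canonical class $[\Bx_0]$ and collapse via $G_n(\Bx)=G_{n_0k}(\Bx_0^{\,k})=G_{n_0}(\Bx_0)$. The only point worth noting is that the paper's definition of multiplicativity is stated for $n,k\geq 2$, so the degenerate cases ($k=1$, or $n_0=1$ where $\mu$ is a point mass) are handled trivially or by idempotency, exactly as in your closing remark.
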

Corollary \ref{C:induced} provides the basis for the following equivalence.
\begin{proposition}\label{P:meanbary}
There is a one-to-one correspondence between the intrinsic means and the barycentric maps on $X$ given
in one direction by assigning to an intrinsic mean $G$ the barycentric map $\beta_G$ and in the reverse
direction assigning to a barycentric map
$\beta$ the mean $G_n(x_1,\ldots, x_n)=\beta((1/n)\sum_{i=1}^n\delta_{x_i})$.
\end{proposition}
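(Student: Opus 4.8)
The plan is to verify the one-to-one correspondence by checking that the two assignments are mutually inverse, using Corollary \ref{C:induced} to make sense of $\beta_G$ on rational-valued finitely supported measures. First I would observe that $\mathcal{P}_0(X)$ consists precisely of the uniform measures $(1/n)\sum_{i=1}^n\delta_{x_i}$, and every such measure takes rational values on its (finite) support; so by Corollary \ref{C:induced}, an intrinsic mean $G=\{G_n\}$ yields a well-defined map $\beta_G:\mathcal{P}_0(X)\to X$ via $\beta_G(\mu)=G_n(\Bx)$ for any $\Bx\in F^n$ inducing $\mu$. I would then check $\beta_G$ is a barycentric map: $\delta_x$ is induced by the $1$-tuple $(x)$, so $\beta_G(\delta_x)=G_1(x)=x$ by idempotency (alternatively, using the constant $n$-tuple and multiplicativity reducing to $G_1$). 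Conversely, given a barycentric map $\beta$, define $G_n(x_1,\ldots,x_n)=\beta((1/n)\sum_{i=1}^n\delta_{x_i})$; this is idempotent since the uniform measure on the constant tuple is $\delta_x$, hence $G_n(x,\ldots,x)=\beta(\delta_x)=x$.

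Next I would check the symmetry and multiplicativity of the mean $G$ built from $\beta$. Symmetry is immediate because a permuted tuple $\Bx_\sigma$ induces the same measure $(1/n)\sum\delta_{x_i}$, so $G_n(\Bx_\sigma)=G_n(\Bx)$. Multiplicativity follows because $\Bx^k$ also induces the same measure: if $x$ appears $j$ times in $\Bx$ then it appears $jk$ times in $\Bx^k$, and $(jk)/(nk)=j/n$, so the induced measures coincide and $G_{nk}(\Bx^k)=\beta(\text{same }\mu)=G_n(\Bx)$. Hence $G$ is intrinsic, so the reverse assignment indeed lands in the intended class.

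Then I would close the loop in both directions. Starting from an intrinsic $G$, forming $\beta_G$, and then forming the mean $n\mapsto \beta_G((1/n)\sum\delta_{x_i})$: by definition of $\beta_G$ this equals $G_n(\Bx)$ for a tuple inducing that measure, and $\Bx$ itself induces it, so we recover $G_n(\Bx)$; thus we get back $G$. Starting from a barycentric map $\beta$, forming the intrinsic mean $G$, and then forming $\beta_G$: given $\mu\in\mathcal{P}_0(X)$, write $\mu=(1/n)\sum_{i=1}^n\delta_{x_i}$ so $\Bx=(x_1,\ldots,x_n)$ induces $\mu$; then $\beta_G(\mu)=G_n(\Bx)=\beta((1/n)\sum\delta_{x_i})=\beta(\mu)$, recovering $\beta$. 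This establishes the bijection.

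The main subtlety — rather than a genuine obstacle — is the well-definedness issue already handled by Lemma \ref{L:induced} and Corollary \ref{C:induced}: $\beta_G(\mu)$ must not depend on the choice of inducing tuple $\Bx$, and this is exactly where intrinsicness (symmetry plus multiplicativity) is used, via the fact that any two tuples inducing $\mu$ have a common ``power'' $[\Bx]^k=[\By]^\ell$ in the sense of Lemma \ref{L:induced}. I expect no difficulty beyond carefully invoking that lemma; the rest is bookkeeping about how repetitions in tuples translate to rational weights. I would note in passing that the correspondence restricts consistently: barycentric maps are only required to be defined on $\mathcal{P}_0(X)$, matching the fact that intrinsic means determine values precisely on uniform measures.
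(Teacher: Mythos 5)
Your proof is correct and follows exactly the route the paper intends: the paper gives no explicit proof of this proposition (it is recalled from \cite{LL16} and presented as an immediate consequence of Corollary \ref{C:induced}), and your verification that the two assignments are well-defined, land in the right classes, and are mutually inverse is precisely the argument being elided. The one point worth making explicit, which you do handle, is that the rational-valued finitely supported measures of Corollary \ref{C:induced} coincide with the uniform measures of $\mathcal{P}_0(X)$ after clearing denominators, so the domains match up.
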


We specialize to means and barycenters in metric spaces.
The following notion of what we call a contractive mean has appeared in other work; see e.g. \cite{LLL}.
\begin{definition} An $n$-mean $G_{n}:X^{n}\to X$ is said to be \emph{contractive} 
if for all ${\bf x}=(x_{1},\dots,x_{n}),{\bf y}=(y_{1},\dots,y_{n})\in X^{n}$
\begin{eqnarray*} d(G_{n}({\bf x}), G_{n}({\bf y}))\leq \frac{1}{n}\sum_{j=1}^{n}d(x_{j},y_{j}).
\end{eqnarray*}
A mean $G=\{G_{n}\}$ is \emph{contractive}  if each $G_{n}$ is contractive.
\end{definition}

We recall the notion of Sturm \cite{St} of a contractive barycentric map on the set of probability measures of
finite first moment on a complete metric space.
\begin{definition} \label{D:cont} A barycentric map $\beta:{\mathcal P}^{1}(X)\to X$ is said to be \emph{contractive} if
$d(\beta(\mu_{1}),\beta(\mu_{2}))\leq d^W(\mu_{1},\mu_{2})$ for all $\mu_{1},\mu_{2}\in{\mathcal P}^{*}(X).$
\end{definition}
A fundamental relationship between contractive intrinsic means and barycentric maps is the  following
(see \cite[Proposition 4.7]{LL16}).
\begin{proposition}\label{P:corres} In a metric space $(X,d)$ the bijective correspondence of Proposition
\ref{P:meanbary} restricts to a bijective correspondence 
between the set of contractive intrinsic means  $G$ on $X$ and the set of contractive
barycentric maps $\beta$ on ${\mathcal P}_0(X)$.  If $X$ is a complete metric space,
then the contractive barycentric maps on ${\mathcal P}_0(X)$ uniquely extend to
contractive barycentric maps on ${\mathcal P}^1(X)$, thus yielding a bijection between 
the set of contractive intrinsic means  $G$ on $X$ and the set of contractive
barycentric maps $\beta$ on ${\mathcal P}^1(X)$.
\end{proposition}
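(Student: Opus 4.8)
The plan is to establish the bijection in two stages, first reducing to the content of Proposition \ref{P:meanbary} and then adding in the metric (contractivity) and completeness considerations. First I would recall that by Proposition \ref{P:meanbary} there is a bijection between intrinsic means $G$ on $X$ and barycentric maps $\beta$ on $\Pro_0(X)$ (the map $\beta$ is determined on all of $\Pro_0(X)$ by Corollary \ref{C:induced}, since every $\mu \in \Pro_0(X)$ is induced by some tuple and is rational-valued; conversely $G_n(x_1,\ldots,x_n) = \beta((1/n)\sum_i \delta_{x_i})$ recovers the mean). So the first claim is just that this bijection \emph{restricts} to one between the contractive members on each side; i.e., $G$ is a contractive intrinsic mean if and only if $\beta_G$ is a contractive barycentric map on $\Pro_0(X)$.

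The forward direction ($G$ contractive $\Rightarrow$ $\beta_G$ contractive) is the step I expect to require the most care, and is presumably the one isolated as \cite[Proposition 4.7]{LL16}. I would argue as follows. Given $\mu_1, \mu_2 \in \Pro_0(X)$, say $\mu_1 = (1/n)\sum \delta_{x_i}$ and $\mu_2 = (1/m)\sum \delta_{y_j}$, one first passes to a common denominator using multiplicativity: replacing the defining tuples by their $k$-fold repetitions $\Bx^k$ does not change $\beta_G(\mu_1) = G_n(\Bx) = G_{nk}(\Bx^k)$, so we may assume $n = m$. Then, given an optimal (or near-optimal) coupling $\pi \in \Pi(\mu_1,\mu_2)$, which is itself a finitely supported uniform measure on $X \times X$ after a further common refinement of denominators, one can list the mass of $\pi$ as $N$ pairs $(x_{i_\ell}', y_{i_\ell}')$, $\ell = 1,\ldots,N$, so that $(1/N)\sum_\ell \delta_{x_\ell'} = \mu_1$ and $(1/N)\sum_\ell \delta_{y_\ell'} = \mu_2$ (after yet another repetition, each $\mu_i$ is induced by the corresponding coordinate listing of $\pi$). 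Applying contractivity of $G_N$ to these two $N$-tuples gives
$$
d(\beta_G(\mu_1), \beta_G(\mu_2)) = d(G_N(\Bx'), G_N(\By')) \leq \frac{1}{N}\sum_{\ell=1}^N d(x_\ell', y_\ell') = \int_{X\times X} d \, d\pi,
$$
and taking the infimum over (the rational-coupling-approximating) $\pi$ yields $d(\beta_G(\mu_1),\beta_G(\mu_2)) \leq d^W(\mu_1,\mu_2)$. The reverse direction is immediate: if $\beta$ is contractive, then for $\Bx, \By \in X^n$ the measures $\mu_1, \mu_2$ they induce satisfy $d^W(\mu_1,\mu_2) \leq (1/n)\sum_j d(x_j,y_j)$ (the diagonal-type coupling $(1/n)\sum_j \delta_{(x_j,y_j)}$ witnesses this), so $d(G_n(\Bx),G_n(\By)) = d(\beta(\mu_1),\beta(\mu_2)) \leq d^W(\mu_1,\mu_2) \leq (1/n)\sum_j d(x_j,y_j)$.

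For the second assertion, assume $X$ is complete. Since $\Pro^1(X)$ is a complete metric space under $d^W$ and $\Pro_0(X)$ is $d^W$-dense in it (recalled in the Preliminaries, from \cite{BO,St}), a contractive—hence uniformly continuous—map $\beta: \Pro_0(X) \to X$ extends uniquely to a continuous map $\bar\beta: \Pro^1(X) \to X$ by the standard extension-by-continuity argument; the extension is again $1$-Lipschitz for $d^W$, so contractive, and it carries $\delta_x$ (which lies in $\Pro_0(X)$) to $x$, hence is a contractive barycentric map. Uniqueness of the extension is immediate from density. Composing this with the restricted bijection of the first part gives the stated bijection between contractive intrinsic means on $X$ and contractive barycentric maps on $\Pro^1(X)$, with the understanding that one should also check the extended barycenter, when restricted back to $\Pro_0(X)$ and converted to a mean, returns the original $G$—which holds since restriction undoes extension. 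The only genuinely delicate point is the rational-coefficient bookkeeping in the forward direction of the first step; everything else is soft.
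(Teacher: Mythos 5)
Your argument is correct, but note that the paper itself offers no proof of this proposition: the first assertion is imported from \cite[Proposition 4.7]{LL16} and the extension statement is the standard density/completeness argument going back to \cite{St}, so there is no in-paper proof to compare against. Your reconstruction is sound, and the one place you flag as delicate is indeed the only real issue: a coupling of two uniform measures with a common denominator $n$ need \emph{not} have rational coefficients, so it is not literally ``a finitely supported uniform measure after a further refinement of denominators.'' Your parenthetical fix (take the infimum over rational couplings, which are dense in the finite-dimensional coupling polytope) works, but the cleanest route is to observe that the cost is linear on the polytope of couplings, whose extreme points by Birkhoff's theorem are of the form $\frac{1}{n}\sum_{i=1}^n \delta_{(x_i,y_{\sigma(i)})}$ for permutations $\sigma$; hence $d^W(\mu_1,\mu_2)=\min_\sigma \frac{1}{n}\sum_i d(x_i,y_{\sigma(i)})$, and contractivity plus symmetry of $G_n$ finishes the forward direction in one line. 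One further pedantic point: in the extension step what is actually needed is completeness of the \emph{target} $X$ (so that the image of a $d^W$-Cauchy sequence converges), not completeness of $\mathcal{P}^1(X)$, though of course both hold here. Everything else --- the reverse implication via the diagonal coupling, and the check that restriction inverts extension --- is as it should be.
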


We turn now to connecting these results to our previous results concerning measures on ordered spaces.

\begin{definition}
An $n$-mean $G_n:S^n\to X$ on a partially ordered set $X$ is said to be
\emph{monotonic} if
$G(x_1,\ldots, x_n)\leq G_n(y_1,\ldots, y_n)$ whenever $x_i\leq y_i$ for
$i=1.\ldots, n$.  A barycentric
map $\beta$ defined on $\mathcal{P}_0(X)$ or on$\mathcal{P}^1(X)$ for the
case $X$ is a metric space
is \emph{monotonic} is $\mu\leq \nu$ implies $\beta(\mu)\leq \beta(\nu)$.
\end{definition}

The following lemmas are crucial for connecting the monotonicity of
barycentric maps with the monotonicity of their corresponding  means.

Recall that a bipartite graph is one in which the set of vertices $V$ is
the disjoint union of two sets $A$ and $B$ and each edge connects
some member of $A$ with some member of $B$.  We recall the following
special case of the well-known Hall's Marriage Theorem \cite{Br} from
graph theory.
\begin{lemma}\label{L:Hall}
Let $V=A\cup B$ be the vertices of a bipartite for which
$A=\{a_1,\ldots,a_n\}$ and $B=\{b_1,\ldots b_n\}$ have the same
cardinality.
Suppose for each $C\subseteq A$, we have $\vert C\vert\leq \vert N(C)\vert$,
where $N(C)=\{b\in B: ab$ is an edge for some $a\in C\}$.  Then there
exists a permutation $\sigma$  of $\{1,\ldots, n\}$ such that
$a_ib_{\sigma(i)}$ is an edge for all $i=1,\ldots,n$.
\end{lemma}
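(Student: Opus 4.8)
The plan is to prove the lemma by induction on $n$, the common size of $A$ and $B$, via the classical two-case argument for Hall's theorem. The base case $n = 1$ is immediate: the hypothesis $1 = |\{a_1\}| \le |N(\{a_1\})|$ forces some edge $a_1 b_j$ to exist, and we set $\sigma(1) = j$. For the inductive step I would assume the statement for every balanced bipartite graph whose two sides have common size strictly less than $n$, and split into two cases according to whether the Hall condition is tight somewhere.

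In the first case, suppose $|N(C)| \ge |C| + 1$ for every nonempty proper $C \subsetneq A$. Pick any edge $a_1 b_j$ (possible since $|N(\{a_1\})| \ge 1$), provisionally match $a_1$ to $b_j$, and pass to the induced bipartite graph $G'$ on $A \setminus \{a_1\}$ and $B \setminus \{b_j\}$. For $C \subseteq A \setminus \{a_1\}$ one has $N_{G'}(C) \supseteq N(C) \setminus \{b_j\}$, hence $|N_{G'}(C)| \ge |N(C)| - 1 \ge |C|$, so $G'$ satisfies the Hall condition and by induction carries a perfect matching; adjoining $a_1 b_j$ gives $\sigma$.

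In the second case, suppose some nonempty proper $C_0 \subsetneq A$ satisfies $|N(C_0)| = |C_0|$. The bipartite graph induced on $C_0 \cup N(C_0)$ has both sides of size $|C_0| < n$ and inherits the Hall condition (for $C \subseteq C_0$, $N(C) \subseteq N(C_0)$), so by induction it has a perfect matching $\sigma_0$ pairing $C_0$ with $N(C_0)$. It remains to match $A \setminus C_0$ with $B \setminus N(C_0)$ in the induced graph $G''$; here $N_{G''}(C) = N(C) \setminus N(C_0)$, and applying the original hypothesis to $C \cup C_0$ together with $N(C \cup C_0) = N(C) \cup N(C_0)$ gives
$$|N_{G''}(C)| = |N(C \cup C_0)| - |N(C_0)| \ge |C \cup C_0| - |C_0| = |C|$$
for every $C \subseteq A \setminus C_0$. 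So $G''$ satisfies the Hall condition with both sides of size $n - |C_0| < n$, and by induction has a perfect matching $\sigma_1$; combining $\sigma_0$ and $\sigma_1$ yields the desired permutation $\sigma$ of $\{1,\ldots,n\}$.

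The only step needing care is the bookkeeping in the second case — the identity $N_{G''}(C) = N(C) \setminus N(C_0)$ and the ensuing counting — together with phrasing the induction so that the hypothesis applies to all balanced bipartite graphs of smaller size, not merely to subgraphs of the given one. I do not expect a genuine obstacle: the argument is entirely elementary once the inductive scaffolding is in place. (Alternatively one could simply invoke \cite{Br}, or deduce the statement from the defect version of Hall's theorem or from an augmenting-path argument, but the two-case induction is the most self-contained route.)
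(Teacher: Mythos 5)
Your proof is correct: the two-case induction (the Halmos--Vaughan argument) is the standard self-contained proof of Hall's theorem, and both cases are handled properly --- in particular the counting $|N_{G''}(C)| = |N(C\cup C_0)| - |N(C_0)| \ge |C\cup C_0| - |C_0| = |C|$ in the tight case is exactly the step that needs care, and you have it right. The paper, however, supplies no proof at all for this lemma; it simply records the statement as a known special case of Hall's Marriage Theorem and cites the reference \cite{Br}, which is the alternative you mention in your closing parenthetical. So your argument does not diverge from the paper's --- it fills in a proof the paper deliberately omits. The only cosmetic remark is that the lemma as stated speaks of edges $a_i b_{\sigma(i)}$ indexed by a permutation rather than of a perfect matching, but translating between the two is immediate and you do so implicitly when you ``combine $\sigma_0$ and $\sigma_1$.''
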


\begin{lemma}\label{L:Hall2}
Let $X$ be a partially ordered, let $\mu$ and $\nu$ be uniform
probabilities, each with
support of cardinality $n$.  If $\mu\leq \nu$ in the sense that
$\mu(A)\leq \nu(\ua A)$ for each $A\subseteq$ supp$(\mu)$, then there exists
a permutation $\sigma$ of $\{1,\ldots, n\}$ such that $x_k\leq
y_{\sigma(k)}$ for $1\leq k\leq n$, where $\{x_1,\ldots, x_n\}$
and $\{y_1,\ldots,y_n\}$ are the supports of $\mu$ and $\nu$ resp.
\end{lemma}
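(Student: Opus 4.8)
The plan is to encode the hypothesis $\mu\leq\nu$ as a combinatorial condition on a bipartite graph and invoke Hall's Marriage Theorem (Lemma~\ref{L:Hall}). Write $\mathrm{supp}(\mu)=\{x_1,\ldots,x_n\}$ and $\mathrm{supp}(\nu)=\{y_1,\ldots,y_n\}$, each listed without repetition; since $\mu$ and $\nu$ are uniform with supports of cardinality $n$, we have $\mu(\{x_i\})=\nu(\{y_j\})=1/n$ for all $i,j$. Form the bipartite graph on vertex sets $A=\{a_1,\ldots,a_n\}$ and $B=\{b_1,\ldots,b_n\}$ by declaring $a_ib_j$ to be an edge precisely when $x_i\leq y_j$. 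A permutation $\sigma$ with $a_ib_{\sigma(i)}$ an edge for all $i$ is then exactly a permutation with $x_i\leq y_{\sigma(i)}$ for all $i$, which is the conclusion we want.

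Thus it suffices to verify Hall's condition: for every $C\subseteq A$, $|C|\leq|N(C)|$. Given $C\subseteq A$, let $S=\{x_i:a_i\in C\}\subseteq\mathrm{supp}(\mu)$, so $|C|=|S|$ and $\mu(S)=|S|/n$. The neighborhood $N(C)$ consists of those $b_j$ with $x_i\leq y_j$ for some $a_i\in C$, i.e.\ those $y_j$ lying in $\ua S$; hence $|N(C)|=|\{j: y_j\in\ua S\}|=|\mathrm{supp}(\nu)\cap\ua S|$, and since $\nu$ is uniform this equals $n\cdot\nu(\ua S\cap\mathrm{supp}(\nu))=n\cdot\nu(\ua S)$ (the last step using that $\nu(\mathrm{supp}(\nu))=1$, so all $\nu$-mass sits on $\mathrm{supp}(\nu)$). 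The hypothesis $\mu\leq\nu$ in the stated sense gives $\mu(S)\leq\nu(\ua S)$, that is, $|S|/n\leq|N(C)|/n$, hence $|C|=|S|\leq|N(C)|$. This verifies Hall's condition, and Lemma~\ref{L:Hall} supplies the desired permutation $\sigma$.

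The only subtle point — and the step I would be most careful about — is the bookkeeping that translates the measure-theoretic inequality $\mu(A)\leq\nu(\ua A)$ for subsets $A$ of $\mathrm{supp}(\mu)$ into the cardinality inequality $|S|\leq|N(C)|$; this rests entirely on the uniformity of $\mu$ and $\nu$ (so measure is proportional to cardinality on the supports) together with $\nu(\mathrm{supp}(\nu))=1$ (so that $\ua S$ and $\ua S\cap\mathrm{supp}(\nu)$ carry the same $\nu$-mass). Everything else is a direct application of Hall's theorem, so there is no real obstacle beyond this translation.
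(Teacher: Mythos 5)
Your proof is correct and follows essentially the same route as the paper: build the bipartite graph with an edge $a_ib_j$ exactly when $x_i\leq y_j$, translate the hypothesis $\mu(S)\leq\nu(\ua S)$ into Hall's condition $|C|\leq|N(C)|$ via uniformity of the measures, and apply Lemma~\ref{L:Hall}. The only difference is that you spell out the measure-to-cardinality bookkeeping a bit more explicitly than the paper does, which is harmless.
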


\begin{proof}  We define a bipartite graph $G$ with vertices $A\cup B$, where
$A=\{x_1,\ldots,x_n\}$ and $B=\{y_1,\ldots, y_n\}$, except
that we make all the vertices distinct if necessary.
We define $x_iy_j$ to be an edge of the graph if $x_i\leq y_j$ in $X$. 
Let $C$ be a subset of $A$ of cardinality $k$.  Then $\mu(C)=k/n$,
so by hypothesis $\nu(\ua C\cap \mbox{supp}(\nu))\geq k/n$, which
implies $\vert \ua C \cap \mbox{supp}(\nu)\vert\geq k$.  Thus the graph
$G$ satisfies the hypotheses of Lemma \ref{L:Hall}, and the lemma
 follows from that result  and the construction of $G$.

\end{proof}

The next result connects monotonicity for means and barycentric maps.
\begin{proposition}\label{P:zmono}
Let $(M,d,\leq)$ be a metric space equipped with a closed partial order.
Let $G$ be an intrinsic mean on $M$ and let $\beta_G$ be the
corresponding barycentric map on $\mathcal{P}_0(M)$.  Then $G$ is monotonic
if and only if $\beta_G$ is.  
\end{proposition}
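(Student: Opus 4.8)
The plan is to prove the equivalence by passing through Corollary~\ref{C:induced}, which lets us compute $\beta_G$ on any rational-valued finitely supported measure via a representing tuple. First I would prove the forward direction: assume $G$ is monotonic and take $\mu\leq\nu$ in $\mathcal{P}_0(M)$, so $\mu=(1/p)\sum\delta_{a_i}$ and $\nu=(1/q)\sum\delta_{b_j}$ are uniform finitely supported measures. By replacing the representing tuples of $\mu$ and $\nu$ by their $k$-fold repetitions for a common multiple of the two support sizes (using multiplicativity, $G_n(\Bx)=G_{nk}(\Bx^k)$, and the fact that this does not change the induced measure), I can assume both are represented by tuples of the \emph{same} length $n$. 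The key point is that the stochastic order $\mu\leq\nu$, which for finitely supported measures is the combinatorial condition $\mu(A)\leq\nu(\ua A)$ of Lemma~\ref{L:stord}, is preserved under this common-refinement step. Then Lemma~\ref{L:Hall2} produces a permutation $\sigma$ with $x_k\leq y_{\sigma(k)}$ for the enumerated (with multiplicity) supports; monotonicity of $G_n$ gives $G_n(x_1,\ldots,x_n)\leq G_n(y_{\sigma(1)},\ldots,y_{\sigma(n)})$, and symmetry of $G$ lets me drop the permutation, so $\beta_G(\mu)=G_n(\Bx)\leq G_n(\By)=\beta_G(\nu)$.

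The reverse direction is easier: assume $\beta_G$ is monotonic on $\mathcal{P}_0(M)$ and take $x_1,\ldots,x_n\leq y_1,\ldots,y_n$ coordinatewise. Form the uniform measures $\mu=(1/n)\sum\delta_{x_i}$ and $\nu=(1/n)\sum\delta_{y_i}$; I would check directly that $\mu\leq\nu$ in the stochastic order. For any $A\subseteq\mathrm{supp}(\mu)$, each $x_i\in A$ has its partner $y_i\in\ua A$, and since the correspondence $i\mapsto i$ is a bijection on indices, the number of indices $i$ with $x_i\in A$ is at most the number of indices with $y_i\in\ua A$, giving $\mu(A)\leq\nu(\ua A)$ and hence $\mu\leq\nu$ by Lemma~\ref{L:stord}. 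Then monotonicity of $\beta_G$ yields $G_n(\Bx)=\beta_G(\mu)\leq\beta_G(\nu)=G_n(\By)$, so $G_n$ is monotonic; as $n$ was arbitrary, $G$ is monotonic.

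The main obstacle is the bookkeeping in the forward direction: making precise that after taking $k$-fold repetitions to equalize tuple lengths, (i) the value $\beta_G$ assigns is unchanged (this is exactly multiplicativity plus Lemma~\ref{L:induced}/Corollary~\ref{C:induced}), and (ii) the stochastic inequality is inherited by the refined measures, since repeating all masses by the same factor $k$ does not alter which open upper sets separate the two measures. I also need to handle the case where the two supports overlap or where $\mathrm{supp}(\mu)$ and $\mathrm{supp}(\nu)$ are not disjoint as abstract vertex sets, which is why Lemma~\ref{L:Hall2} is phrased with the clause ``make all the vertices distinct if necessary''; this is purely a matter of working with the multiset of listed entries rather than the underlying set, and contributes no real difficulty. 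Once these points are set up, the two implications are immediate from Lemma~\ref{L:Hall2}, the symmetry of $G$, and Lemma~\ref{L:stord}.
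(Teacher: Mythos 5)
Your proposal is correct and takes essentially the same route as the paper's own proof: the easy implication is verified by checking the stochastic inequality directly for coordinatewise-comparable tuples, and the harder implication uses the $k$-fold repetition trick to equalize tuple lengths, then Lemma~\ref{L:stord} and the Hall's-theorem Lemma~\ref{L:Hall2} to extract a permutation, finishing with monotonicity and symmetry of $G_n$. Your remarks on the multiset bookkeeping and the ``make vertices distinct'' clause address exactly the points the paper glosses over.
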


\begin{proof} 
Suppose first that $\beta_G$ is monotonic.  Suppose that $x_i\leq y_i$ for $1\leq i\leq n$.
It follows easily that $(1/n)\sum_{i=1}^n\delta_{x_i}\leq (1/n)\sum_{i=1}^n \delta_{y_i}$, so
$$G_n(x_1,\ldots,x_n)=\beta_G\left(\frac{1}{n}\sum_{i=1}^n \delta_{x_i}\right)
\leq \beta_G\left(\frac{1}{n}\sum_{i=1}^n \delta_{y_i}\right)=G_n(y_1,\ldots,y_n).$$
Hence $G$ is monotonic.

Conversely suppose that $G$ is monotonic and suppose that $\mu=(1/k)\sum_{i=1}^k\delta_{x_i}$,
$\nu=(1/m)\sum_{i=1}^m \delta_{y_i}$, and $\mu\leq \nu$.  We set $\Bx=(x_1,\ldots, x_k)$ and $\By=
(y_1,\ldots, y_m)$.   Using the notation of equation (\ref{E:1}), we have 
$\Bx^m,\By^k\in M^n \mbox{ for } n=km$.  We can now rewrite 
$\mu=(1/n)\sum_{i=1}^n \delta_{x_i}$
where the $x_i$ range through the entries of $\Bx^m$.  and similarly we rewrite 
$\nu=(1/n)\sum_{i=1}^n \delta_{y_i}$.  We apply Lemmas \ref{L:stord} and \ref{L:Hall2}
to these alternative representations of $\mu$ and $\nu$ to conclude that 
there exists a permutation $\sigma$ of $\{1,\ldots,n\}$ such that $x_i\leq
y_{\sigma(i)}$ for $1\leq i\leq n$.  Since $G_n$ is monotone and symmetric 
$$\beta_G(\mu)=G_n(x_1,\ldots,x_n)\leq G_n(y_{\sigma(1)},\ldots, y_{\sigma(n)})
=G_n(y_1,\ldots,y_n)=\beta_G(\nu).$$

\end{proof}

\begin{definition}
Let $M$ be a metric space $M$ equipped with a closed partial order.  A pair $(\mu,\nu)\in \mathcal{P}^1(M)\times\mathcal{P}^1(M)$ 
is called \emph{order approximable} if $\mu\leq \nu$ and there exist sequences $\{\mu_n\},\{\nu_n\}\subseteq \mathcal{P}_0(M)$  
such that for each $n$, $\mu_n\leq \nu_n$ and with respect to the Wasserstein metric $\mu_n\to \mu$ and $\nu_n\to \nu$.
\end {definition}

Theorem \ref{T:ordpair} gives important sufficient conditions for each pair $\mu\leq \nu$ in $\mathcal{P}^1(M)$ to be approximable.

\begin{theorem}\label{T:mono}  Let $M$ be an ordered and complete metric space in which each pair $\mu\leq \nu$ in $\mathcal{P}^1(M)$ is approximable.
If $G$ is a monotonic contractive intrinsic mean, then the corresponding barycentric map $\beta_G: \mathcal{P}^1(M)\to M$ is
monotonic.
\end{theorem}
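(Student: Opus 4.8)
The plan is to combine the approximability hypothesis with the fact (Proposition \ref{P:corres}) that a contractive intrinsic mean $G$ extends uniquely to a contractive barycentric map $\beta_G$ on all of $\mathcal{P}^1(M)$, and then push monotonicity through a limiting argument. First I would take $\mu,\nu\in\mathcal{P}^1(M)$ with $\mu\leq\nu$. By hypothesis the pair $(\mu,\nu)$ is approximable, so there are sequences $\{\mu_n\},\{\nu_n\}\subseteq\mathcal{P}_0(M)$ with $\mu_n\leq\nu_n$ for each $n$ and $\mu_n\to\mu$, $\nu_n\to\nu$ in $d^W$. On $\mathcal{P}_0(M)$ the barycentric map $\beta_G$ is the restriction of the extended map and agrees with the mean construction, so by Proposition \ref{P:zmono} monotonicity of $G$ gives $\beta_G(\mu_n)\leq\beta_G(\nu_n)$ for every $n$.

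Next I would invoke continuity of $\beta_G$ on $\mathcal{P}^1(M)$: since $\beta_G$ is contractive, $d(\beta_G(\mu_n),\beta_G(\mu))\leq d^W(\mu_n,\mu)\to 0$, so $\beta_G(\mu_n)\to\beta_G(\mu)$ in $M$, and similarly $\beta_G(\nu_n)\to\beta_G(\nu)$. Finally, because the partial order $\leq$ on $M$ is closed, the set $\{(x,y): x\leq y\}$ is closed in $M\times M$; the sequence $(\beta_G(\mu_n),\beta_G(\nu_n))$ lies in this set and converges to $(\beta_G(\mu),\beta_G(\nu))$, hence the limit lies in the set as well, i.e. $\beta_G(\mu)\leq\beta_G(\nu)$. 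This establishes monotonicity of $\beta_G$ on $\mathcal{P}^1(M)$.

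I do not expect a serious obstacle here: the three ingredients (approximability, monotonicity transfer on $\mathcal{P}_0(M)$ via Proposition \ref{P:zmono}, and closedness of the order plus contractivity-giving-continuity) fit together directly. The only point requiring mild care is making sure the extended $\beta_G$ on $\mathcal{P}^1(M)$ genuinely restricts to $\beta_G$ on $\mathcal{P}_0(M)$ — but this is exactly the content of Proposition \ref{P:corres}, so it may simply be cited. One should also note completeness of $M$ is used precisely to guarantee that this extension exists; with that in hand the argument is a routine density-plus-closedness limiting argument.
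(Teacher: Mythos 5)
Your proposal is correct and follows essentially the same route as the paper's own proof: existence of the extension via Proposition \ref{P:corres}, monotonicity on $\mathcal{P}_0(M)$ via Proposition \ref{P:zmono}, and then the density-plus-closed-order limiting argument using contractivity for continuity. You simply spell out the continuity and closed-graph steps in more detail than the paper does.
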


\begin{proof}  The existence of the contractive barycentric map $\beta_G:\mathcal{P}^1(M)\to M$ follows from
Proposition \ref{P:corres}.  That $\beta_G$ is monotonic on $\mathcal{P}_0(M)$ follows from Proposition \ref{P:zmono}.
For $\mu\leq \nu$ in $\mathcal{P}^1(M)$, by hypothesis there exist sequences $\{\mu_n\},\{\nu_n\}\subseteq \mathcal{P}_0(M)$  
such that for each $n$, $\mu_n\leq \nu_n$ and $\mu_n\to \mu$ and $\nu_n\to \nu$.  By continuity of $\beta_G$ and the closedness
of the partial order, it follows that $\beta_G(\mu)\leq\beta_G(\nu)$.

\end{proof}

For the remainder of the section we work in the following setting.  Let ${\mathbb A}$ be a unital $C^{*}$-algebra with
identity $e$. Let ${\mathbb A}^{+}$ be the set of positive
invertible elements of ${\mathbb A}$, a normal open cone of the Banach
 subspace ${\mathcal H}(\mathbb{A})$ of self-adjoint elements.  We suppose further that ${\mathcal H}(\mathbb{A})$
is conditionally directed complete, i.e., that  down-directed subsets of $\mathcal{H}(\mathbb{A})$ that are bounded below have
 an infimum.  
 The \emph{Karcher mean} $\Lambda=\{\Lambda_n\}$ on $\mathbb{A}^+$ is defined as the unique
 solution in $\mathbb{A}^+$ of the Karcher equation
 $$X=\Lambda_n(A_1,\ldots A_n)\Leftrightarrow \sum_{i=1}^n \log(X^{-1/2}A_i X^{-1/2})=0.$$
 It has been shown in \cite{LL14} that this equation does indeed have a unique solution in $\mathbb{A}^+$ and 
 in \cite{LL14,LL16} that the resulting 
 mean $\Lambda_n$ has the following properties:
 \begin{itemize}
 \item[(i)]  $\Lambda_n$ is idempotent and intrinsic, in particular, symmetric:
 \item[(ii)] $($Monotonicity$)$ $B_{i}\leq A_{i}$ for all $1\leq i\leq n \Rightarrow
\Lambda_n(B_1,\ldots, B_n)\leq \Lambda_n(A_1,\ldots,A_n);$
\item[(iii)] (Contractivity) $d(\Lambda_n(A_1,\ldots,A_n),\Lambda_n(B_1,\ldots, B_n))\leq (1/n)\sum_{i=1}^n d(A_i,B_i)$, \\
where $d$ is the Thompson metric.
\end{itemize}

The correspondence of Proposition \ref{P:corres} yields a uniquely determined contractive  barycentric map 
$\beta_\Lambda:\mathcal{P}^1(\mathbb{P})\to \mathbb{P}$ satisfying
$\beta_\Lambda((1/n)\sum_{i=1}^n \delta_{A_i})=\Lambda_n(A_1,\ldots, A_n)$.  Furthermore, in light of the remarks in
Example \ref{E:Cstar} and Theorem \ref{T:mono},  $\beta_\Lambda:\mathcal{P}^1(\mathbb{A}^+)\to\mathbb{A}^+$ 
is monotonic.  We summarize:
\begin{theorem}\label{T:mainmono}
Let $\mathbb{A}$ be a $C^*$-algebra with identity for which the Banach space $\mathcal{H}(A)$ is
conditionally directed complete.  Then the Karcher mean on $\mathbb{A}^+$ extends uniquely to a 
monotonic contractive barycentric map $\beta_\Lambda:\mathcal{P}^1(\mathbb{A}^+)\to \mathbb{A^+}$.
\end{theorem}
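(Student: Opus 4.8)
The plan is to obtain Theorem \ref{T:mainmono} as a direct application of Theorem \ref{T:mono} to the ordered metric space $M=\mathbb{A}^+$ equipped with the Thompson metric and to the mean $G=\Lambda=\{\Lambda_n\}$, with uniqueness of the extension supplied by Proposition \ref{P:corres}. Thus the task reduces to checking, one by one, that the hypotheses of Theorem \ref{T:mono} hold in this setting.

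First I would record that $(\mathbb{A}^+,d)$, with $d$ the Thompson metric, is a complete metric space, which is Thompson's theorem as recalled in the section on the Thompson metric. It is moreover an ordered metric space, since the order $x\leq y\iff y-x\in\overline{\mathbb{A}^+}$ has graph $\{(x,y):y-x\in\overline{\mathbb{A}^+}\}$, which is closed in the relative Banach topology because $\overline{\mathbb{A}^+}$ is closed; and since the Thompson topology agrees with the relative Banach topology, this order is also closed for the metric $d$.

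Next I would invoke the approximation result already in hand: by Example \ref{E:Cstar} (equivalently, Proposition \ref{P:Thomp}), every pair $q\leq p$ in $\mathcal{P}^1(\mathbb{A}^+)$ is order approximable, i.e., admits sequences $\{q_n\},\{p_n\}\subseteq\mathcal{P}_0(\mathbb{A}^+)$ with $q_n\leq p_n$ for each $n$ and $q_n\to q$, $p_n\to p$ in the Wasserstein metric. It then remains to observe that $\Lambda$ is a monotonic contractive intrinsic mean: idempotence, symmetry, and the multiplicativity needed for intrinsicness are contained in property (i), monotonicity is property (ii), and contractivity with respect to $d$ is property (iii). (It is precisely here, in guaranteeing that the Karcher equation has a unique solution enjoying properties (i)--(iii), that the standing hypothesis that $\mathcal{H}(\mathbb{A})$ is conditionally directed complete is used, via \cite{LL14,LL16}.) With every hypothesis verified, Theorem \ref{T:mono} produces a contractive barycentric map $\beta_\Lambda:\mathcal{P}^1(\mathbb{A}^+)\to\mathbb{A}^+$ that is monotonic and restricts on $\mathcal{P}_0(\mathbb{A}^+)$ to the barycentric map determined by $\Lambda$; uniqueness of this extension is the final clause of Proposition \ref{P:corres}, applicable because $\mathbb{A}^+$ is complete.

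The proof is therefore essentially bookkeeping: the substantive work has already been done in the earlier sections, the genuinely hard step being Theorem \ref{T:ordpair} on the simultaneous order approximation of a comparable pair by comparable uniform finitely supported measures, which feeds into Theorem \ref{T:mono}. The only points requiring a little care in the present assembly are matching the notion of order-approximable pair used in Theorem \ref{T:mono} with the conclusion of Proposition \ref{P:Thomp}, and confirming that it is the Thompson-metric topology --- not merely the Banach topology --- in which the order on $\mathbb{A}^+$ is closed; both are immediate from the section on the Thompson metric.
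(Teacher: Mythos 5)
Your proposal is correct and follows essentially the same route as the paper: the paper likewise obtains the result by combining Proposition \ref{P:corres} (unique contractive extension), Example \ref{E:Cstar} (order approximability of comparable pairs in $\mathcal{P}^1(\mathbb{A}^+)$ via Proposition \ref{P:Thomp}), and Theorem \ref{T:mono} together with properties (i)--(iii) of the Karcher mean. Your added checks (closedness of the order in the Thompson topology, completeness) are sensible bookkeeping that the paper leaves implicit.
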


\section{Future Work}  Many questions and conjectures remain open.  One would like to show that in the context of this paper
integrable measures on $\mathbb{P}$ satisfy an appropriate Karcher equation and that the solution is unique.  The Karcher mean has 
been shown to satisfy a number of inequalities (e.g., concavity, Ando-Hiai inequality, etc.) and it is of interest to extend, where possible,
these results to the case of Borel probability measures and the Karcher barycentric map.  Indeed the approach and techniques of this 
paper should make possible the extension of quite a number of results involving inequalities of matrix and operator means to 
inequalities of integrable measures.


\begin{thebibliography} {2}

\bibitem{Bh}
R. Bhatia, Positive Definite Matrices, Princeton Series in Applied Mathematics, Princeton, 2007.

\bibitem{Bi}
P. Billingsley, Convergence of Probability Measures, Wiley, 2009.

  \bibitem{BO}
  F. Bolley, Separability and completeness for the Wasserstein distance, S\'eminaire de Probabilit\'es XLI
Lecture Notes in Mathematics Volume \textbf{1934} (2008),  371-377.


\bibitem{Br}
R. Brualdi, Introductory Combinatorics, Prentice-Hall/Pearson,  Upper
Saddle River, N.J., 2010.

\bibitem{KL}
S. Kim and H. Lee, The power mean and the least squares mean of probability measures on the space of positive definite matrices,
Linear Algebra Appl. \textbf{465} (2015), 325-346. 

\bibitem{LLL}
J. Lawson, H. Lee and Y. Lim, Weighted geometric means, Forum Math. \textbf{24} (2012),  1067-1090.

\bibitem{LL14}
J. Lawson and Y. Lim, Karcher means and Karcher equations of positive
operators,  Trans. Amer. Math. Soc. Series B \textbf{1} (2014), 1-22.


\bibitem{LL16}
J. Lawson and Y. Lim, Contractive Barycentric Maps, to appear J.\ Operator Theory.

\bibitem{MS}
E. Marczewski and P. Sikorski, Measures in Nonseparable Metric Spaces, Colloq.\ Math.\
\textbf{1} (1948), 133-139.


\bibitem{Nu} R. D. Nussbaum, Hilbert's projective metric and iterated
nonlinear maps, Memoirs of Amer.\ Math.\ Soc.\ \textbf{391}, 1988.

\bibitem{Nu94} R. D. Nussbaum, Finsler structures for the part metric and Hilbert's
projective metric and applications to ordinary differential
equations, Differential and Integral Equations \textbf{7} (1994),
1649-1707.

\bibitem{Pa}
M. P\'alfia, Operator means of probability measures and generalized Karcher equations,
Adv. Math. \textbf{289} (2016), 951-1007. 

\bibitem{St} K.-T. Sturm,
Probability measures on metric spaces of nonpositive curvature, Heat
kernels and analysis on manifolds, graphs, and metric spaces (Paris,
2002), 357--390,
 Contemp. Math. \textbf {338}, Amer. Math. Soc., Providence, RI, 2003.

 \bibitem{Th}
  A. C. Thompson,
 On certain contraction mappings in a partially ordered vector space,
  Proc. Amer. Math. Soc.  {\bf 14} (1963), 438--443.

  \bibitem{Vi}
  C. Villani, Topics in Optimal Transportation, Graduate Studies in Mathematics 58, Springer, Berlin, 2003.



\end{thebibliography}
\end{document}